\newcommand{\cupall}{\pmb{\pmb{\cup}}}
\newcommand{\remove}[1]{}
\newcommand{\intr}{\operatorname{\bf int}}
\newcommand{\bigmid}{\;\big|\;}
\newcommand{\w}{\operatorname{{\bf w}}}
\newcommand{\PDP}{\textsc{PDPP}}
\newcommand{\DP}{\textsc{DPP}}
\newcommand{\trim}{{\mathbf{trim}}}
\newcommand{\tw}{{\mathbf{tw}}}
\newcommand{\clos}{{\mathbf{clos}}}
\renewcommand{\int}{{\mathbf{int}}}
\newcommand{\bnd}{\textsc{\bf bnd}}
\theoremstyle{remark}
\theoremstyle{plain}
\newtheorem{theorem}{Theorem}
\newtheorem{lemma}{Lemma}
\newtheorem{observation}{Observation}
\newtheorem{proposition}{Proposition}
\begin{document}
\thanksmarkseries{alph}
\title{Irrelevant Vertices for the Planar Disjoint Paths Problem\thanks{{Emails:  Isolde Adler: {\sf I.M.Adler@leeds.ac.uk},
Stavros Kolliopoulos: {\sf sgk@di.uoa.gr},
Philipp Klaus Krause: {\sf philipp@informatik.uni-frankfurt.de},
Daniel Lokshtanov: {\sf daniello@ii.uib.no}, 
Saket Saurabh: {\sf saket@imsc.res.in}, 
Dimitrios  M. Thilikos: {\sf sedthilk@thilikos.info}}\ .%
}
}
\author{Isolde Adler\thanks{%
School of Computing.
University of Leeds, 
UK.
}
\and Stavros G. Kolliopoulos\thanks{Department of Informatics and Telecommunications, National and Kapodistrian University of Athens, Athens, Greece.}\ \thanks{Co-financed by the European Union (European Social Fund -- ESF) and
Greek national funds through the Operational Program ``Education and Lifelong Learning'' of the
National Strategic Reference Framework (NSRF) - Research Funding Program:
``{\sl Thalis. Investing in knowledge society through the European Social Fund}''.}
\and Philipp Klaus Krause$^{\mbox{\footnotesize b}}$\thanks{Supported by a fellowship within the FIT-Programme of the German Academic Exchange Service (DAAD) at NII, Tokyo and by DFG-Projekt GalA, grant number AD 411/1-1.}
\and Daniel Lokshtanov\thanks{Department of Informatics, University of Bergen, Norway.} \thanks{Supported by ``Rigorous Theory of Preprocessing, ERC Advanced Investigator Grant 267959" and ``Parameterized Approximation, ERC Starting Grant 306992".} 
\and Saket Saurabh\thanks{The Institute of Mathematical Sciences, CIT Campus, Chennai, India.}\,  $^{\mbox{\footnotesize f\! g}}$
\and Dimitrios  M. Thilikos\thanks{Department of Mathematics, National and Kapodistrian University of Athens, Athens, Greece.}\ \!\! \thanks{AlGCo project-team, CNRS, LIRMM, France.}\ $^{\mbox{\footnotesize d}}$
}

\date{\empty}
 
\maketitle
\begin{abstract}
\noindent 
The {\sc Disjoint Paths Problem} asks, given a graph $G$ and a 
set of pairs of terminals $(s_{1},t_{1}),\ldots,(s_{k},t_{k})$, whether 
there is a collection of $k$ pairwise vertex-disjoint paths linking  
$s_{i}$ and $t_{i}$, for $i=1,\ldots,k.$
In their $f(k)\cdot n^{3}$  algorithm for this problem, Robertson and Seymour
introduced the {\sl irrelevant vertex technique} according to 
which in every instance of treewidth greater than $g(k)$ there is an ``irrelevant'' vertex
whose removal creates an equivalent instance of the problem. 
This fact is based on the celebrated {\sl Unique Linkage Theorem}, 
whose -- very technical --  proof gives a function $g(k)$ that is 
responsible for an  immense parameter dependence in the 
running time of the algorithm.
In this paper we give a new and self-contained proof of this result that strongly exploits 
the combinatorial properties of planar graphs and achieves  $g(k)=O(k^{3/2}\cdot 2^{k}).$ 
Our bound is radically 
better than the bounds known for general graphs.
\end{abstract}

\noindent \textbf{Keywords:} Graph Minors, Treewidth, Disjoint Paths Problem

\section{Introduction}

One of the most studied problems in graph theory is the
 {\sc Disjoint Paths Problem (\DP)}: {\em Given a graph $G$ and a set ${\cal P}$ of $k$ pairs 
of terminals, $(s_{1},t_{1}),\ldots,$
$(s_{k},t_{k})$, decide whether $G$ 
contains $k$ vertex-disjoint paths  $P_{1},\ldots,P_{k}$ where $P_{i}$
has endpoints $s_{i}$ and $t_{i}$, $i=1,\ldots,k$}.  
In addition to  its numerous applications in areas such as network
routing and VLSI layout, this problem has  
been the catalyst for extensive research in algorithms and
combinatorics \cite{Schrijver03}.
{\DP}
is  {\sf NP}-complete, 
along with its  edge-disjoint or  
directed variants, even when the input graph is planar~\cite{Vygen95npco,MiddendorfP93onth,KramerL84thec,Lynch75}.
The celebrated algorithm of Roberson and Seymour 
solves it  however in $f(k)\cdot n^{3}$ steps, where $f$  is some computable 
function~\cite{RobertsonS-GMXIII}. This implies that, when we parameterize 
{\DP} by the number $k$ of pairs of terminals, the
problem is 
fixed-parameter tractable.
The Robertson-Seymour  algorithm  is the central algorithmic result of the Graph 
Minors series of papers,  one of the deepest and most influential bodies of work in graph
theory. 

The basis of the algorithm in~\cite{RobertsonS-GMXIII} is the 
so-called  {\em irrelevant-vertex technique}  which can be summarized very
roughly as follows.
As long as  the input graph $G$ violates certain structural conditions, it is possible
to find a vertex $v$ that is {\em solution-irrelevant:} 
every collection of paths certifying 
a solution to the problem can be  rerouted to an {\em equivalent} one,
that links the same pairs of terminals,  but in which the new paths avoid  $v.$ 
One then iteratively removes
such irrelevant vertices until the structural conditions 
are met. By that point the  graph has been simplified enough so that the problem
can be attacked via dynamic programming. 
  
The following two structural conditions are used by the algorithm
in~\cite{RobertsonS-GMXIII}: 
{\sf (i)} $G$ excludes a clique, whose size depends on $k$,
as a minor and {\sf (ii)} $G$ has  treewidth bounded by some function of $k.$ 
When it comes to  enforcing Condition  ({\sf ii}), the 
aim  is to prove that in graphs without 
big clique-minors and with treewidth at least $g(k)$ there is always 
a solution-irrelevant vertex. This is the most 
complicated part of the proof and it was
postponed until the later 
papers in the series \cite{RobertsonS-XXI,RobertsonSGM22}. 
The bad news is that the complicated proofs also imply 
an {\sl immense}  parametric
dependence, as
expressed by the function $f,$  of the running time on the parameter $k.$ 
This puts the algorithm  outside the realm of feasibility even for 
elementary values of $k.$ 
%

The ideas above were powerful enough  to be applicable  also to  problems 
outside the context of the Graph Minors series. 
During the last decade, they have   been applied to  many other combinatorial problems and now
they  constitute a basic paradigm
in parameterized algorithm design (see, e.g., 
\cite{DawarGK07,DawarK09domi,GolovachKPT09,KawarabayashiK08,Kawarabayashi:2010cs,Kobayashi:2009jt}). 
However, in most applications, the need for overcoming 
the high parameter dependence emerging from the 
structural theorems of the Graph Minors series, especially  those 
in \cite{RobertsonS-XXI,RobertsonSGM22}, remains imperative. 
Hence two natural directions of research are: simplify parts 
of the original proof for the general case or focus on specific graph classes 
that may admit proofs with  better parameter dependence.  An 
important  step in the first direction
was taken recently by Kawarabayashi and Wollan in~\cite{KawarabayashiW2010asho} who gave an easier 
and shorter proof of the results in \cite{RobertsonS-XXI,RobertsonSGM22}.
While the parameter  dependence of the new proof is  certainly much
better than the previous, immense, function, it is still huge: a rough estimation from~\cite{KawarabayashiW2010asho}
gives a lower bound for $g(k)$ of magnitude  $2^{2^{{{2^{\Omega(k)}}}}}$ which in turn implies 
a lower bound for $f(k)$ of magnitude ${2^{2^{2^{{2^{\Omega(k)}}}}}}.$

In this paper we offer a solid advance in the second direction, focusing on planar graphs (see also~\cite{ReedRSS91find,Schrijver94} for previous results on planar graphs).
We show  that, for planar graphs, $g(k)$ is single exponential. In particular we prove the following result.
\begin{theorem}
\label{main}
Every instance of \DP\ consisting 
of a planar graph $G$ with treewidth at least $82\cdot k^{3/2}\cdot 2^{k}$ and $k$ pairs of terminals
contains a vertex $v$ such that every solution to {\DP} 
can be replaced by an equivalent one whose paths avoid~$v.$ 
\end{theorem}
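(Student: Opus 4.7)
The plan is to combine the planar grid-minor theorem with a rerouting argument based on the non-crossing structure of planar linkages. Since $G$ is planar and $\tw(G)\geq 26\cdot k^{3/2}\cdot 2^{k}$, the planar grid/wall theorem provides a wall $W$ in $G$ of height $\Omega(k^{3/2}\cdot 2^{k})$. Such a wall comes equipped with a natural family $C_{1}\supset C_{2}\supset\cdots\supset C_{N}$ of pairwise nested ``brick cycles'' bounding closed discs $D_{1}\supset D_{2}\supset\cdots\supset D_{N}$ in the plane. Since there are only $2k$ terminals, a simple pigeonhole argument lets us restrict attention to a still-large subfamily of concentric cycles none of which meets a terminal.

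Next, fix any solution $\mathcal{P}=(P_{1},\dots,P_{k})$ of \DP. For each retained cycle $C_{i}$, the intersection of $\mathcal{P}$ with $C_{i}$ defines a \emph{trace}: the cyclic sequence along $C_{i}$ of points where some $P_{j}$ crosses $C_{i}$, together with the matching that pairs each entry point with the exit point joined to it by the corresponding subpath of $P_{j}$ inside $D_{i}$, and a bit recording for each $P_{j}$ whether its terminals lie inside or outside $D_{i}$. Because $G$ is planar and the $P_{j}$ are vertex-disjoint, every such matching is non-crossing along $C_{i}$.

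The combinatorial core of the proof is to bound the number of possible traces by roughly $k^{3/2}\cdot 2^{k}$: the $2^{k}$ factor counts the subset of indices $j$ for which $P_{j}$ actually crosses $C_{i}$ (or rather the inside/outside configuration of its terminals), while the $k^{3/2}$ factor comes from a Catalan-type count of non-crossing partial matchings on at most $2k$ crossing points. Once the number of nested cycles exceeds the number of traces, pigeonhole yields two cycles $C_{a}\supset C_{b}$ with identical traces. The wall-annulus between $C_{a}$ and $C_{b}$ then serves as a rerouting region: every subpath of $\mathcal{P}$ entering $D_{b}$ can be replaced by a path lying in this annulus that realizes the same non-crossing pairing of crossing points on $C_{a}$, while avoiding every vertex in the interior of $D_{b}$. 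Planarity and the regular grid-like structure of the wall ensure that the required non-crossing linkage in the annulus exists simultaneously for all indices.

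The main obstacle is the rerouting step: one has to build the new disjoint subpaths explicitly inside the annulus of the wall, verify that they realize the prescribed non-crossing matching, and check that they do not interfere with the part of $\mathcal{P}$ lying outside $D_{a}$. This is where the combinatorial regularity of the wall (and a careful choice of which concentric cycles to retain, so that they inherit enough ``radial'' connections) is indispensable, and it is also where the precise constants driving $26\cdot k^{3/2}\cdot 2^{k}$ are fixed. Once rerouting is justified, any vertex strictly inside $D_{b}$ that belongs to $W$ is solution-irrelevant, proving the theorem.
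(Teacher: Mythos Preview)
Your outline has a genuine gap at the counting step, and it is exactly the step the paper works hardest to handle.

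You assert that on each retained cycle $C_i$ there are ``at most $2k$ crossing points'' and then invoke a Catalan-type bound on non-crossing matchings. Neither part holds. A single path $P_j$ of the solution can cross a fixed cycle $C_i$ arbitrarily many times, so the number of crossing points on $C_i$ is not bounded by any function of $k$ a priori; consequently the ``trace'' of $C_i$ lives in an infinite set and your pigeonhole argument does not get off the ground. Even if one somehow had only $2k$ crossing points, the number of non-crossing perfect matchings on $2k$ cyclically ordered points is the Catalan number $C_k\sim 4^{k}/k^{3/2}$, which is larger than $2^{k}$, not a factor $k^{3/2}$ as you claim; so the arithmetic leading to $26\cdot k^{3/2}\cdot 2^{k}$ would also be wrong.

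The paper's proof is organized precisely to overcome the first obstacle. Instead of analysing an arbitrary solution, it passes to a \emph{${\cal C}$-cheap} linkage (minimizing the number of edges off the concentric cycles) and proves that cheapness together with tightness of the cycles forces a rigid \emph{convex} structure on how segments of the linkage sit among the $C_i$ (Lemma~\ref{DiscTheorem1}). This structural control is then exploited through a \emph{segment tree}: its leaves are bounded by $2k-2$ (Lemma~\ref{boundsegextr}), hence its ``real height'' is at most $2k-3$ (Lemma~\ref{theo:bounding_number_of_types}), and if the innermost cycle were touched the tree would have large height, forcing a long chain of parallel segments. That chain yields an $L$-tidy tilted grid of capacity $>2^{k}$ (Lemma~\ref{theo:bounding_more}), inside which a rerouting lemma (Lemma~\ref{ofiisf8}, based on Proposition~\ref{taken_from_IPEC_11_paper}) produces a strictly cheaper equivalent linkage --- contradicting cheapness. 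The $2^{k}$ arises from this rerouting step, and the $k^{3/2}$ factor comes from $\lceil\sqrt{2k+1}\rceil$ (avoiding terminals when extracting concentric cycles, Lemma~\ref{kkkkk5j}) times a linear-in-$k$ bound from the segment-tree height; it has nothing to do with Catalan numbers. Your rerouting intuition is in the right spirit, but the mechanism that bounds how wildly a solution can wind through the cycles is the missing ingredient.
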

%

The  proof  of Theorem~\ref{main}  is presented in Section~\ref{sec:ub} and deviates  significantly from  those
in~\cite{RobertsonS-XXI,RobertsonSGM22,KawarabayashiW2010asho}.  It  is
self-contained  and exploits extensively  the combinatorics  of planar
graphs.  Given a  \DP\ instance defined on a planar graph $G,$ we prove that if $G$ 
contains as a subgraph a 
subdivision of a sufficiently large (exponential in $k$)  grid, 
whose ``perimeter'' 
does not enclose any terminal, then the ``central'' vertex $v$ of the grid
is  solution-irrelevant for this instance. It follows that 
the ``area'' provided by the grid  is big enough so that every solution
 that uses $v$   
can be rerouted to an equivalent one that does not go so deep in 
the grid and therefore avoids the vertex $v$.

Combining Theorem~\ref{main} with
 known algorithmic results, it is possible 
to reduce, in $2^{2^{O(k)}}\cdot n^2$ steps, a planar 
instance of \DP\  to an equivalent one whose graph has treewidth  $2^{O(k)}.$
Then, using standard dynamic programming on tree decompositions, 
a solution, if one exists, can be found 
in $2^{2^{O(k)}}\cdot n $ steps.
The  parametric dependence of this algorithm is a step forward
in the study of the parameterized complexity of \DP\ on planar graphs.
This algorithm is abstracted in the following theorem, whose proof is in Section~\ref{sec:algo}.
%
%
%
%

\begin{theorem}   \label{theorem:algo}
There exists an algorithm 
that, given an instance $(G,{\cal P})$ of \DP, 
where $G$ is a planar  $n$-vertex graph and $|{\cal P}|=k$,  
either reports that $(G,{\cal P})$ is a NO-instance or outputs 
a solution of \DP\ for $(G,{\cal P})$. This algorithm runs in
$2^{2^{O(k)}}\cdot n^2$ steps.
\end{theorem}

An extended abstract of this  work, without any proofs, appeared 
in \cite{AdlerKKLST11tigh}.  
Some of our ideas  have  proved useful  in  the recent
breakthrough  result of  Cygan et  al.  that establishes fixed-parameter
tractability for $k$-disjoint paths on planar directed graphs~\cite{CyganMPP13thep}.

%
%



\section{Basic definitions}
\label{sec:def}

Throughout this paper, given a collection of sets ${\cal C}$ we denote by 
$\cupall {\cal C}$  the set $\cup_{x\in {\cal C}}x$, i.e., the union of all sets in ${\cal C}$.

All graphs that we consider  are finite, undirected, and simple. We denote the vertex set of a graph
$G$ by $V(G)$ and the edge set by $E(G).$ Every edge is a two-element subset of
$V(G).$ A graph $H$ is a \emph{subgraph} of a graph $G$, denoted by $H\subseteq G$, if $V(H)\subseteq V(G)$ and
$E(H)\subseteq E(G).$ Given two graphs $G$ and $H$, we define $G\cap H=(V(G)\cap V(H), E(G)\cap V(H))$ and  $G\cup H=(V(G)\cup V(H), E(G)\cup V(H))$. Given a $S\subseteq V(G)$, we also denote by $G[S]$ the subgraph 
of $G$ induced by $S$.


A \emph{path} in a graph $G$ is a  
connected acyclic subgraph  with at least one vertex
whose vertices have degree at most 2. The {\em length}
of a path $P$ is equal to the number of its edges. The {\em endpoints}
of a path $P$ are its vertices of degree 1 (in the trivial 
case where there is only one endpoint $x$, we 
 say that the endpoints of $P$ are $x$ and $x$).
An $(x,y)$-path of $G$ is any path of $G$ whose endpoints are $x$ and $y$.

A {\em cycle} of a graph $G$ is a connected subgraph of $G$
whose vertices have  degree 2. 
%
For graphs $G$ and $H$ the \emph{cartesian product} is the graph whose vertex set is 
$V(G) \times V(H)$ and  whose edge set is $\{\{(v, v'),(w, w')\} \mid  (\{v,w\} \in E(G) \text{ $\wedge$ } v' = w') \text{ $\vee$ } (v = w \text{ $\wedge$ } \{v',w'\} \in E(H))\}.$
%

\paragraph{The {\sc Disjoint Paths} problem.}
The problem that we examine in this paper is the following.\\

\fbox{\begin{minipage}{13cm}
{\sc Disjoint Paths (DPP)} \\
	{\sl Input}: A graph $G$, and  a collection  ${\cal P}=\{(s_{i},t_{i})\in V(G)^{2}, i\in\{1,\ldots,k\}\}$
of pairs of $2k$  terminals of $G$. \\
	{\sl Question}: Are there $k$ pairwise vertex-disjoint paths $P_1,\ldots,P_k$ in $G$
		such that for $i\in\{1,\ldots,k\}$, $P_i$ has endpoints $s_i$ and $t_i$?
\end{minipage}} \medskip\medskip\medskip \\
%
We call the $k$-pairwise  vertex-disjoint paths
certifying a YES-instance of \DP\ a {\em solution} of \DP\
for the input $(G,{\cal P})$.
Given an instance $(G,{\cal P})$ of \DP,
we say that a non-terminal vertex $v\in V(G)$ is {\em irrelevant for} $(G,{\cal P})$,  
if $(G,{\cal P})$ is a YES-instance if and only if 
$(G\setminus v,{\cal P})$ is a YES-instance.
We denote by  \PDP\ the restriction of \DP\ on 
instances $(G,{\cal P})$ where $G$ is a planar graph.

\paragraph{Minors.}
 A graph $H$ is a \emph{minor} of a graph $G$, if 
 there is a function $\phi: V(H)\rightarrow 2^{V(G)}$,  such that
 \begin{itemize}
 \item[$i.$] For every two distinct vertices $x$ and $y$ of $H$, $G[\phi(x)]$ and $G[\phi(y)]$
 are two vertex-disjoint connected subgraphs of $G$ and
 \item[$ii.$] for every two adjacent vertex $x$ and $y$ of $H$, $G[\phi(x)\cup \phi(y)]$ is a
 connected subgraph of $G$. 
 \end{itemize}
 We call the function $\phi$ {\em minor model} of $H$ in $G$.

%




\paragraph{Grids.}
Let $m,n\geq 1.$ The $(m\times
n$)-\emph{grid} is the Cartesian product of a path of length $m-1$ and
a path of length $n-1.$ In the case of a {\em square grid} where $m=n$,  we say that $n$ is the {\em size} of the grid. Given that $n,m\geq 2$, the {\em corners} of an $(m\times
n$)-\emph{grid} are its vertices of degree 2.  When we refer to a $(m\times
n$)-\emph{grid} we will always assume an orthogonal orientation of 
it that classifies its corners to the {\em upper left}, {\em upper right}, 
{\em down right}, and 
{\em down left} corner of it.
 
Given that $\Gamma$ is an  $(m\times
n$)-\emph{grid},  we say that a vertex of $G$ is one of its {\em centers}
if its distance from the set of its corners is the maximum possible. Observe 
that a square grid of even size has exactly $4$ centers.
We also consider an $(m\times
n$)-{grid} embedded in the plane so that, if it has more than $2$ faces then the
infinite one is incident to more than 4 vertices. The {\em outer cycle} of an embedding of an $(m\times
n$)-{grid}  is the one that is the boundary of its infinite face.
We also refer to the {\em horizontal} and the {\em vertical lines}
of an $(m\times
n$)-\emph{grid} as its paths between vertices of degree smaller than 4
that are traversing it either ``horizontally'' or ``vertically'' respectively.  We make the convention that an $(m\times
n$)-{grid} contains $m$ vertical lines and $n$ horizontal lines.
The {\em lower horizontal line}  and the {\em higher horizontal line} 
of $\Gamma$ are defined in the obvious way
 (see Figure~\ref{fig:tgoop}
for an example).

\begin{figure}[ht]
\begin{center}
\includegraphics[width=12.3cm]{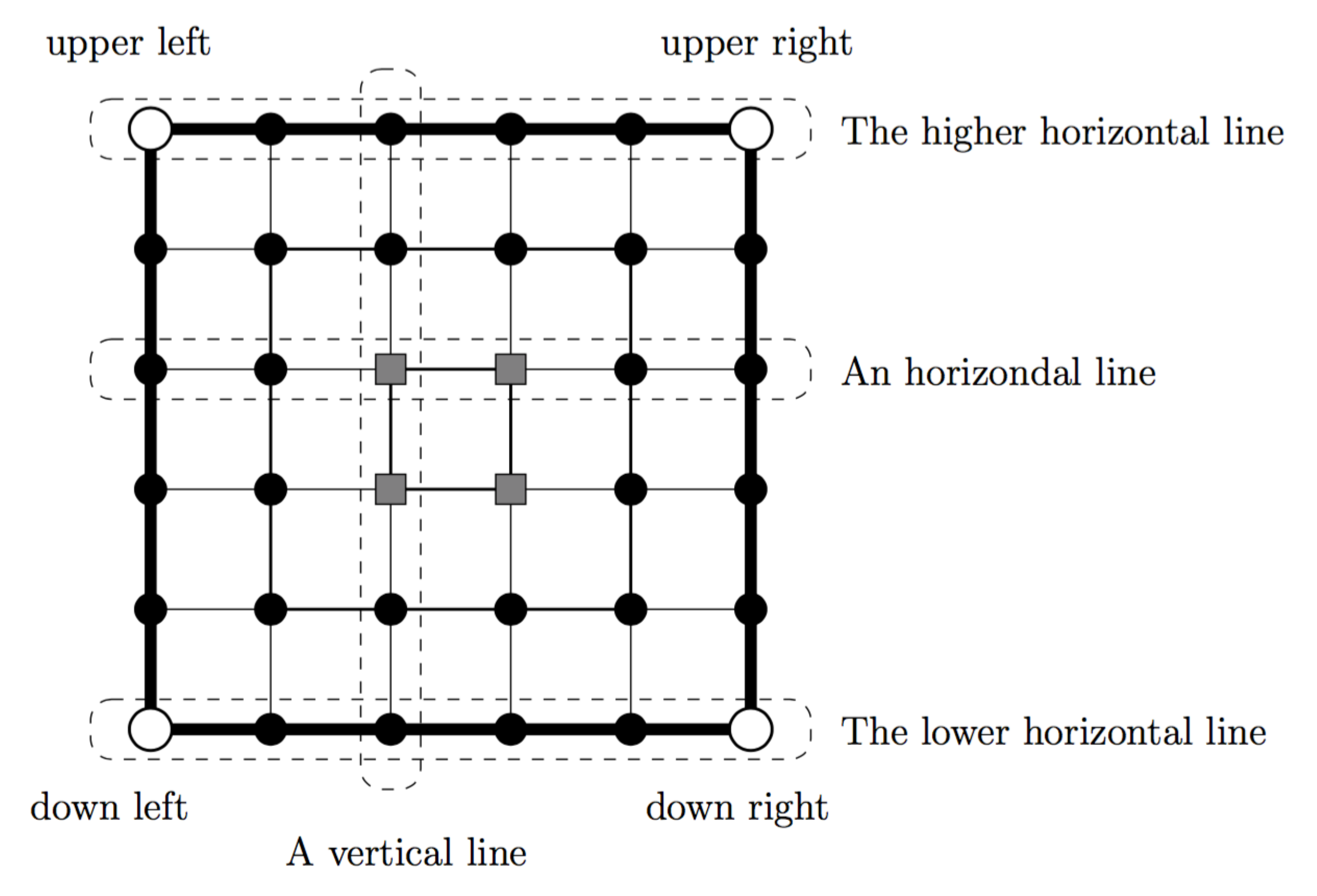}
\end{center}
\caption{A drawing of the $(6\times 6)$-grid. 
The four white round vertices are its corners and the four grey square vertices are its centers.
The cycle formed by the ``fat'' edges is the outer cycle.}
\label{fig:tgoop}
\end{figure}

\paragraph{Plane graphs}
Whenever we refer to a planar graph $G$ we consider an 
embedding of $G$ in the plane $\Sigma=\Bbb{R}^{2}$. 
To simplify notation, we do not distinguish
between a vertex of $G$ and the point of $\Sigma$ used in the
drawing to represent the vertex or between an edge and the arc
representing it.  
We also consider a plane graph $G$ 
as the union of the points corresponding to its vertices
and edges. That way, edges and faces are considered to be open sets of $\Sigma$.
Moreover,  a subgraph $H$ of $G$ can be seen as a graph
$H$, where the points corresponding to $H$ are a subset of the points corresponding to $G$.

Recall that $\Delta \subseteq \Sigma$ is
an open  (resp. closed)  disc if it is homeomorphic to
$\{(x,y):x^2 +y^2< 1\}$ (resp. $\{(x,y):x^2 +y^2\leq 1\}$).
Given a cycle $C$ of $G$ we define its {\em open-interior} (resp. {\em open-exterior})
as the connected component of $\Sigma\setminus C$ that is disjoint 
from (resp. contains)
the infinite face of $G$. The {\em closed-interior} (resp. {\em closed-exterior}) 
of $C$
is the closure of its {\em open-interior} (resp. {\em open-exterior}).
Given a set $A\subseteq \Sigma,$ we denote its {\em interior} (resp. {\em closure}) by $\int(A)$ (resp. $\clos(A)$).
An {\em open (resp. closed) arc} $I$ in $\Bbb{R}^2$ is any set homeomorphic to the set $\{(x,0)\mid x\in(0,1)\}$
(resp.  $\{(x,0)\mid x\in[0,1]\}$)
and the {\em endpoints} of $I$ are defined in the obvious way.
We also define $\trim(I)$ as the set of all points of the arc $I$ except for its endpoints.
%


\paragraph{Outerplanar graphs.}
An  \emph{outerplanar} graph is a plane graph 
whose vertices are all incident to the infinite face. 
If an edge of an outerplanar graph is incident 
to its infinite face then we call it {\em external}, otherwise we
call it {\em internal}.
The {\em weak dual} of an
outerplanar graph $G$ is the graph obtained from the dual of $G$
after removing the vertex corresponding to the infinite face of the embedding.
Notice that if the outerplanar graph $G$ is biconnected, then its weak dual is a tree.
We call a face of an outerplanar graph {\em simplicial} if 
it corresponds to  a leaf of the graph's  weak dual.


\paragraph{Treewidth.}
A \emph{tree decomposition} of a graph $G$ is a pair 
$(T,\chi)$, consisting of a rooted
tree 
$T$ and a mapping $\chi\colon V(T)\to 2^{V(G)}$, such that
for each $v \in V(G)$ there exists $t \in V(T)$ with $v \in \chi(t)$,
for each edge $e \in E(G)$ there exists  a node $t \in V(T)$ with 
$e \subseteq \chi(t)$, and for each $v \in V(G)$ the set
$\{t \in V(T) \mid v \in \chi(t) \}$ is connected in $T.$
%
%
%

The \emph{width} of $(T,\chi)$ is defined as
$\w(T,\chi):= 
\max\big\{\left|\chi(t)\right|-1\ \bigmid t\in V(T)\big\}.$

The \emph{tree-width of $G$} is defined as
\[
	\tw(G):= 
	\min\big\{\w(T,\chi)\ \bigmid (T,\chi) \text{ is a tree decomposition of }G\big\}.
\]

%
%
%

We need the next proposition that follows directly by combining the 
main result of~\cite{GuT12impr}  and 
 (5.1) from~\cite{RobertsonS91GMX}.

\begin{proposition}
\label{gutam}
If  $G$ is a planar graph and ${\bf tw}(G)\geq 4.5\cdot k+1$, then 
$G$ contains a $(k\times k)$-grid as a mimor.
\end{proposition}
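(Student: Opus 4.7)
The statement is a version of the improved planar grid-minor theorem of Gu and Tamaki. Since it is cited from an external paper, I only outline the strategy one would follow. The plan is to turn large branchwidth into a ``deep'' planar tangle, and then to convert this tangle into a $(k\times k)$-grid minor by building concentric noose cycles together with radial paths.

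First, I would invoke the Seymour--Thomas correspondence between branch decompositions of a planar graph and sphere-cut decompositions, so that a branch decomposition of width $w$ produces a tangle $\mathcal{T}$ of order $w$ represented by noose separators: simple closed curves in $\Sigma$ that meet the embedding only at vertices and realize each tangle separation. This lets one translate the purely combinatorial hypothesis ${\bf bw}(G)\geq 3k+1$ into a planar geometric object with which one can reason.

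Next, from this tangle I would extract a nested family $N_{1}\subset N_{2}\subset\cdots\subset N_{k}$ of closed discs in $\Sigma$ bounded by nooses consistent with the orientation of $\mathcal{T}$, in such a way that the boundary of each $N_{i}$ carries an actual cycle $C_{i}$ of $G$. The factor $3$ in $3k+1$ reflects the fact that each step outward must charge a constant amount of tangle order, roughly one for the separator itself and two to guarantee enough surrounding structure to continue; this is precisely the refinement Gu and Tamaki achieve over the earlier Robertson--Seymour--Thomas proof. With the $C_{i}$ in hand, I would use the fact that $\mathcal{T}$ has order $> k$ to route $k$ pairwise internally disjoint radial paths $P_{1},\dots,P_{k}$ from $C_{1}$ to $C_{k}$, meeting the intermediate cycles in a cyclically consistent order; existence follows because no set of fewer than $w$ vertices can tangle-separate the deepest disc from its boundary. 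The union of the $C_{i}$ and the $P_{j}$ is a subdivision of the $(k\times k)$-cylindrical grid, and a single contraction along one radial path yields a $(k\times k)$-grid minor.

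The main obstacle, and the heart of the Gu--Tamaki argument, is the second step: forcing a chain of $k$ genuinely concentric graph cycles out of a tangle of order only $3k+1$, rather than a larger constant as in earlier proofs. This demands a careful planar-dual charging argument that shares the ``budget'' of the tangle between cycle construction and the radial connectivity needed in the third step; a naive approach costs five or more units of tangle order per cycle and only recovers the weaker classical bound.
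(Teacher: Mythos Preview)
The paper does not prove this proposition at all: it is stated as a known result with the citation \cite{GuT12impr} and is used as a black box. There is therefore no proof in the paper to compare your proposal against.

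Your outline is a reasonable high-level sketch of the Gu--Tamaki argument (tangle of order ${\bf bw}(G)$, sphere-cut/noose machinery, concentric cycles plus radial paths), and you correctly identify that the content of their improvement lies in getting the constant down to $3$. But as written it is only a strategy description, not a proof: the crucial step---extracting $k$ concentric cycles from a tangle of order merely $3k+1$---is asserted rather than argued, and the phrase ``careful planar-dual charging argument'' is a placeholder for the actual work. For the purposes of this paper, however, that is exactly the intended status of the proposition: a cited external fact, not something to be reproved here.
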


Our algorithmic results require  the following proposition. It follows 
from the main result of~\cite{PerkovicR99anim} (see also Algorithm (3.3) in~\cite{RobertsonS-GMXIII}). The parametric  dependence of $k$ in the running time 
follows because the algorithm in~\cite{PerkovicR99anim} uses as a subroutine 
the algorithm in~\cite{Bod96alin} that runs in $2^{k^{O(1)}}\cdot n$  steps.

\begin{proposition}
\label{besdtop}
There exists an algorithm that, given an $n$-vertex graph $G$ and a positive integer $k$,
either outputs a tree decomposition of $G$ of width at most $k$ or 
outputs a subgraph $G'$ of $G$ with treewidth greater than $k$ and 
a tree decomposition of $G'$ of width at most  $2k,$ in $2^{k^{O(1)}}\cdot n$  steps.
\end{proposition}

%

%
%

%

%

\section{Irrelevant vertices in graphs of large treewidth}  \label{sec:ub}

In this section we prove our main result, namely   Theorem~\ref{main}.
We introduce the notion of cheap linkages and explore  their structural properties  
in  Subsections~\ref{subsec:cheap} and \ref{subsec:bounds}. 
In Subsection~\ref{subsec:irre} we bring together the structural
results to show the existence of an irrelevant vertex in a graph of
large treewidth.

\subsection{Configurations and cheap linkages}
\label{subsec:cheap}

In this subsection we introduce some basic definitions on planar graphs
that are necessary for our proof.

\paragraph{Tight concentric cycles.}

Let $G$ be a plane graph and let $D$ be a disk that is the closed interior of some cycle $C$ of $G$. We say that $D$ is {\em internally chordless} if there is no 
path in $G$ whose endpoints are vertices of $C$ and whose edges 
belong to the open interior of $C$.

Let ${\cal C}=\{C_0,\ldots,C_r\},$ be a sequence of cycles in $G.$ 
We denote by   $D_{i}$ the closed-interior of $C_{i}, i\in\{0,\ldots,r\},$
and we say that ${\cal D}=\{D_{0},\ldots,D_{r}\}$ is the {\em disc sequence} of ${\cal C}.$
 We call
${\cal C}$ \emph{concentric}, if for all $i\in\{0,\ldots,r-1\}$, the cycle $C_i$ is contained
in the open-interior of $D_{i+1}.$ The sequence  ${\cal C}$ of concentric cycles  is \emph{tight} in $G$, if, 
in addition,
\begin{itemize}
\item  $D_0$ is {\em internally chordless}. 
\item For every $i\in\{0,\ldots,r-1\}$, there is no cycle 
of $G$ that is contained in  $D_{i+1}\setminus D_i$ and 
 whose closed-interior $D$ has the property  $D_{i}\subsetneq D\subsetneq D_{i+1}.$
\end{itemize}

\begin{figure}[ht]
\begin{center}
\includegraphics[width=6.5cm]{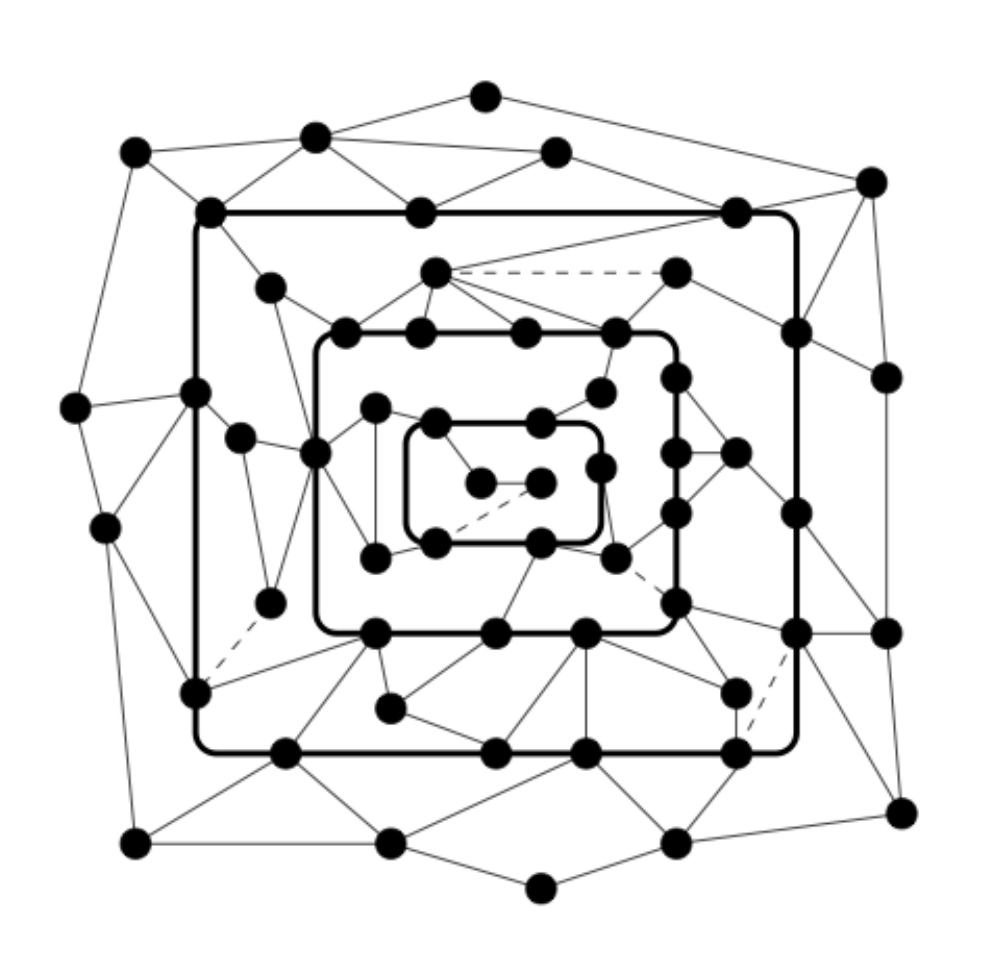}
\end{center}
\caption{An example of a plane graph $G$ and a tight sequence of 3 concentric cycles in it.
Notice that the addition to  $G$ of any of the dashed edges makes this collection of cycles non-tight.}
\label{fig:tight}
\end{figure}

\begin{lemma}
\label{u82mnb1o}
There exists an algorithm 
that given a positive integer $r$,  an $n$-vertex  plane graph $G$, and a $T\subseteq V(G)$, either  outputs a tree decomposition of $G$ of width at most 
$9\cdot (r+1)\cdot \lceil \sqrt{|T|+1}\, \rceil )$ or an internally chordless cycle $C$ 
of $G$ such that  there exists a tight sequence of cycles  $C_{0},\ldots, C_{r}$ in $G$ where 
\begin{itemize}
\item
$C_{0}=C$ and 
\item
all vertices of $T$
are in the open exterior of $C_{r}$.
\end{itemize}
Moreover, this algorithm runs in $2^{(r\cdot \sqrt{|T|})^{O(1)}}\cdot n$ steps.

%
\end{lemma}

\begin{proof}
%
%
%
%
%
%
%
Let $x=|T|+1$ and $y=2(r+1)\cdot \lceil \sqrt{x}\, \rceil $.
From Proposition~\ref{gutam}, if $\tw(G)\geq  4.5\cdot y+1$, then $G$ contains as a minor 
a $(y\times y)$-grid $\Gamma$. 
We now observe that the grid $\Gamma$ contains as subgraphs $x$ 
pairwise disjoint $(2(r+1)\times 2(r+1))$-grids $\Gamma_{1},\ldots,\Gamma_{x}$. Note that
each $\Gamma_{i}, i\in\{1,\ldots,x\}$ contains a sequence of $r+1$ concentric cycles
that, given a minor model $\phi$ of $\Gamma$ in $G$, can be 
used to construct, in linear time, a sequence of  $r+1$ 
concentric cycles ${\cal C}_{i}=\{C_{0}^{i},C_{1}^{i},\ldots,C_{r}^{i}\}$
 in $G$ such that for every $i,j\in\{1,\ldots,x\}$, where  $i\neq j$, 
all cycles in ${\cal C}_{j}$ are in the open exterior of $C_{r}^{i}.$

Note that at least one, say $C_{r}^{i}$, of the cycles in $\{C_{r}^{1},\ldots,C_{r}^{x}\}$
should  contain all the vertices of $T$ in its open exterior. 
Let $e$ be any edge of $C_{0}^{i}$.
Let also $f$ be the face of $G$ that is contained in the 
open interior of $C_{0}^{i}$ and is incident to $e$. 
Let $J_{f}$ be the graph consisting of the 
vertices and the edges that are incident to $f$. It is easy to verify that, 
$J_{f}$
contains an internally 
chordless cycle $C$ that contains the edge $e$. Given $C_{0}^{i}$, 
the cycle $C$ can be found in linear time. Notice now that $G$ contains  
a tight sequence of cycles $C_{0},C_{1},\ldots,C_{r}$ such that $C_{0}=C$
and where, for $h\in\{0,\ldots,r\}$, $C_{h}$ is in the closed interior of $C_{h}^{i}$.
The result follows as the open exterior of $C_{r}$ contains the open exterior of $C_{r}^{i}$
and therefore contains all vertices in $T$.

The algorithm runs as follows: it first uses
the algorithm of  Proposition~\ref{besdtop} for $k=4.5\cdot y$.
If the algorithm outputs a tree decomposition of $G$ of width at most $k$, 
then we are done. Otherwise it outputs a subgraph $G'$ of $G$
where $\tw(G')>k$ and a tree decomposition of $G'$ of width $\leq 2k$.
We use this tree decomposition in order to find a minor model $\phi$ 
of the $(y\times y)$-grid $\Gamma$ in $G'$. This can be done in $2^{k^{O(1)}}=2^{(r\cdot \sqrt{|T|})^{O(1)}}\cdot n$
steps using the algorithm in~\cite{AdlerDFST2010fast} (or, alternatively, the algorithm in~\cite{Hicks04bran}). Clearly, $\phi$ is also a minor model of $\Gamma$ in $G$.
We may now use $\phi$, as explained above, in order to identify, in linear time,
 the required   internally 
chordless cycle $C$ in $G$.
%
%
\end{proof}
%

\paragraph{Linkages.}
A \emph{linkage} in a graph $G$ is a non-empty subgraph $L$ of $G$  whose connected components are all paths. The {\em paths} of a { linkage} are its connected components and we denote them by ${\cal P}(L).$
The \emph{terminals} of a linkage $L$ are the endpoints of the paths in ${\cal P}(L)$, and
the \emph{pattern} of $L$ is the set $\big\{\{s,t\}\mid {\cal P}(L)\text{ contains a path}$ $\text{from $s$ to $t$ in $G$}\big\}.$ Two linkages are {\em equivalent} if they have the same pattern.
%

\paragraph{Segments.}
Let $G$ be a plane graph 
and let $C$ be a cycle in $G$ whose closed-interior is $D$.
Given a path $P$ in $G$ we say that a subpath $P_0$ of $P$ is
a $D$\textup{-segment} of $P$, if $P_0$ is a non-empty (possibly edgeless) path 
obtained by intersecting $P$ with $D.$
For a linkage $ L$ of $G$ we say that
a path $P_0$ is a $D$\emph{-segment} of $L$, if $P_0$ is
a $D$\emph{-segment} of some path $P$ in $\mathcal P(L).$

\begin{figure}[ht]
\begin{center}
\includegraphics[width=9cm]{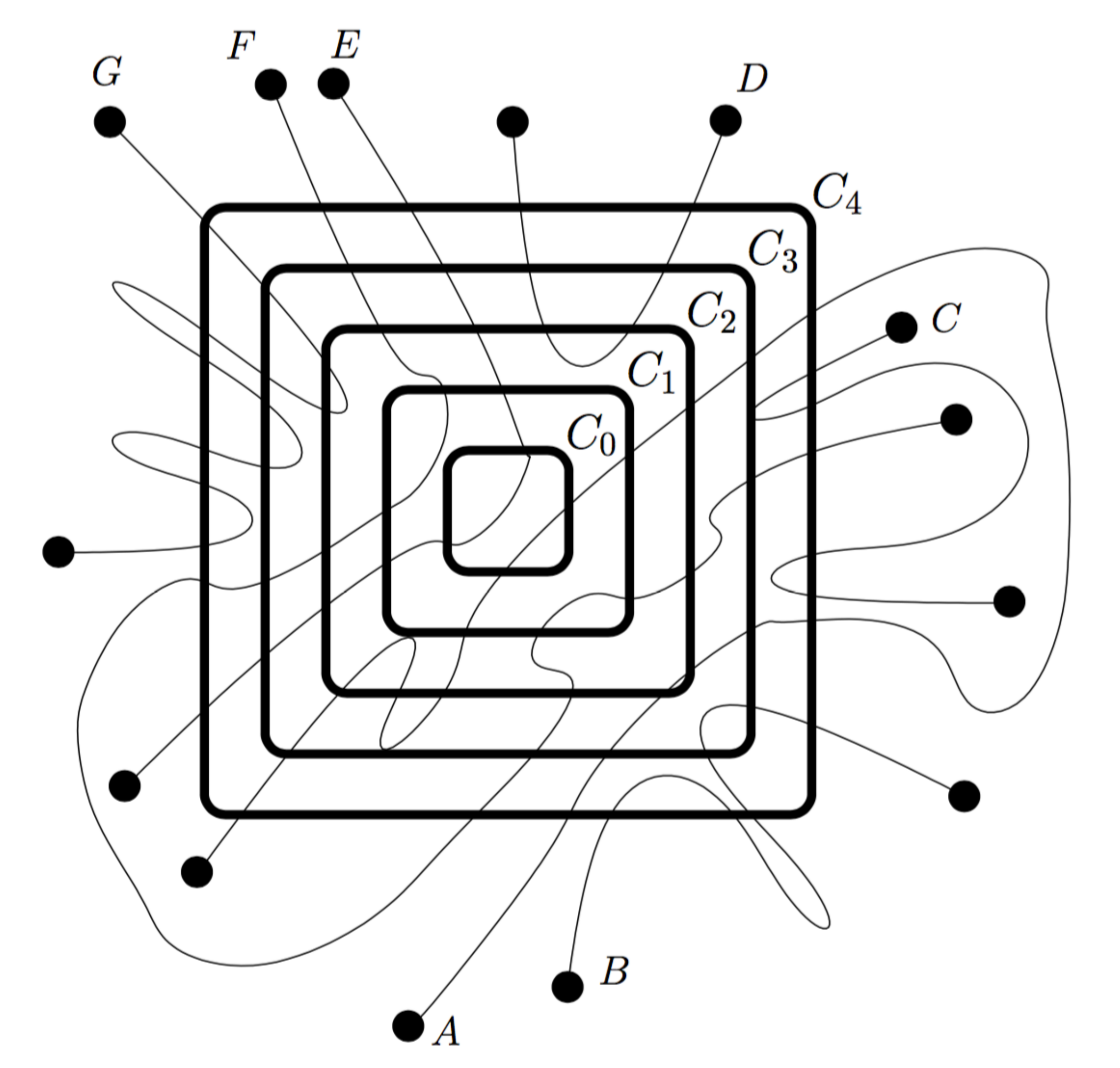}
\end{center}
\caption{An example of a  CL-configuration ${\cal Q}=({\cal C},L)$ where ${\cal C}$ contains $5$ cycles and $L$ has $7$ paths. ${\cal Q}$ has 13 segments. Linkage paths $A$, $B$, $C$, $D$, $E$, $F$, and $G$, contain 2, 2, 2, 1, 1, 2, 3 of these segments respectively. Also the eccentricities of the segments of $A$, are $0$ and $2$, of $B$ are $3$ and $4$.
Notice that one of the two segments of $A$ has two 3-chords, each having 2 semi 3-chords.}
\label{fig:cdL}
\end{figure}

 \paragraph{\mbox{\rm CL}-configurations.}
Given a plane graph $G$, we say that a pair ${\cal Q}=({\cal C},L)$ is a {\em CL-configuration}
of $G$  of {\em depth} $r$ if ${\cal C}=\{C_0,\ldots,C_r\}$ is a 
sequence of concentric cycles in $G,$  $L$ is a linkage of $G,$ and
 $D_r$ does not contain any terminals of $L.$ 
A  \emph{segment} of  ${\cal Q}$ is any $D_r$-segment of $L$. 
The {\em eccentricity} of a segment $P$ of ${\cal Q}$ is the minimum
$i$ such that $V(C_{i}\cap P)\neq\varnothing$.
A segment of  ${\cal Q}$ is {\em extremal} 
if it is has  eccentricity $r$. Observe that 
if ${\cal C}$ is tight then any extremal segment  is a subpath of $C_{r}.$
Given a cycle $C_{i}\in{\cal C}$ and a segment $P$ of ${\cal Q}$
we define the {\em $i$-chords} of $P$ as the connected components of 
$P\cap \int(D_{i})$ (notice that $i$-chords are open arcs).
For every  $i$-chord $X$ of $P$, we define the {\em $i$-semichords}
of $P$ as the connected components of the set $X\setminus D_{i-1}$ (notice 
that $i$-semichords are open arcs).
Given a segment $P$ that does not have any $0$-chord,
we define its {\em zone}  as the connected  component 
of $D_{r}\setminus P$ that does not contain the open-interior of $D_{0}$ (a zone is an open set).

\begin{figure}[ht]
\begin{center}
\includegraphics[width=6cm]{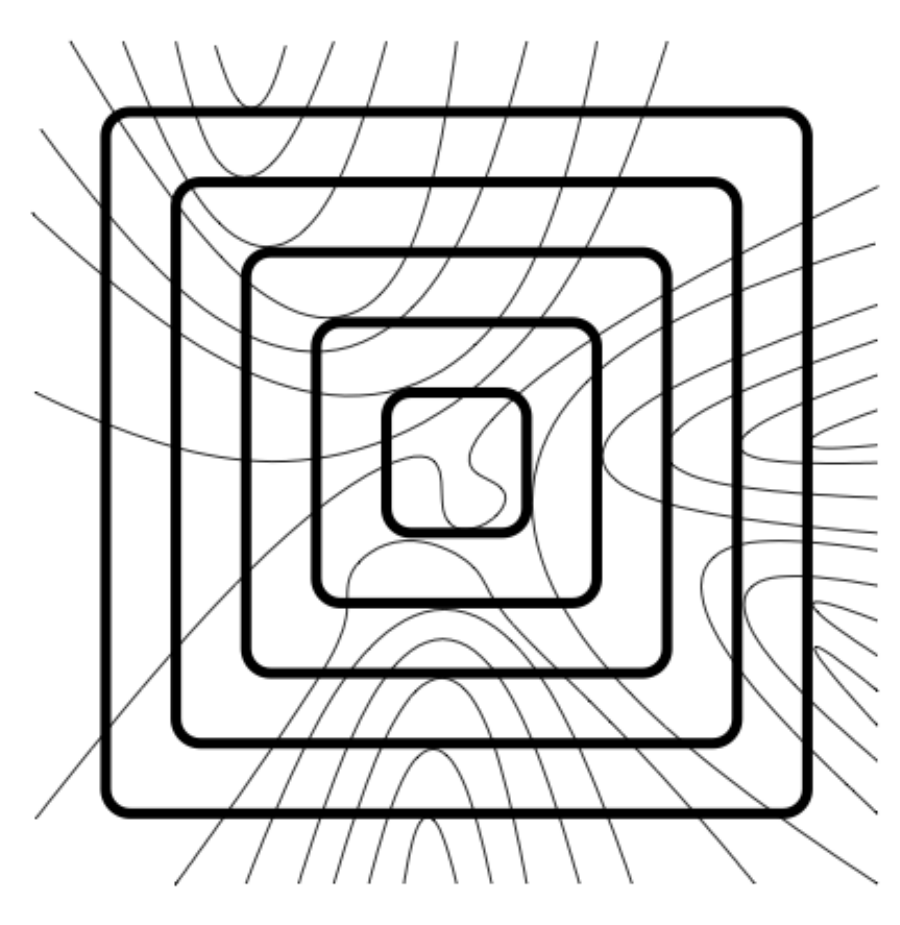}
\end{center}
\caption{An example of a  CL-configuration $({\cal C},L)$ 
where the linkage $L$ is ${\cal C}$-cheap. Only   the 5 concentric cycles of ${\cal C}$ and a cropped part of the linkage $L$ are depicted. Notice that the collection of concentric cycles ${\cal C}$ is not tight.}
\label{fig:cheapCL}
\end{figure}

A CL-configuration ${\cal Q}=({\cal C},L)$ is called {\em reduced} if 
the graph $L\cap\cupall {\cal C}$ is edgeless. 
Let ${\cal Q}=({\cal C},L)$ be a CL-configuration  of $G$
and let $E^{\bullet}$ be the set of all edges of the graph $L\cap\cupall {\cal C}$.
We then define $G^{*}$ as the graph obtained if we contract in $G$ all edges in $E^{\bullet}$. We also define ${\cal Q}^{*}$
as the pair $({\cal C}^{*},L^{*})$  obtained if in $L$ and in the cycles of ${\cal C}$
we contract all edges of $E^{\bullet}$. Notice that ${\cal Q}^{*}$ is a reduced CL-configuration of $G^{*}$. We call $({\cal Q}^*,G^*)$ the {\em reduced pair}
of $G$ and ${\cal Q}$.

\paragraph{Cheap linkages.}
Let $G$ be a plane graph and ${\cal Q}=({\cal C},L)$ be a {CL-configuration}
of $G$ of depth $r$.
We define the function $c: \{L \mid L \text{\ is a linkage of\ }  G\} \to \mathbb{N}$ so  that 
\begin{displaymath}
	c(L)=|E(L)\setminus \bigcup_{i\in\{0,\ldots,r\}}E(C_{i})|.
\end{displaymath}
A linkage $L$ of $G$ is  \textup{${\cal C}$-cheap}, if there is no other  {CL-configuration}
${\cal Q}'=({\cal C},L')$ such that $L'$ has the same pattern as $L$ and $c(L) > c(L').$ 
Intuitively, the function $c$ defined above penalizes every edge of the linkage that does not lie 
on some cycle $C_{i}.$

\begin{observation}
\label{obs1}
Let ${\cal Q}=({\cal C},L)$ be a {CL-configuration} and let $(G^*,{\cal Q}^*=({\cal C}^*,L^*))$ be the reduced pair of $G$ and ${\cal Q}$. Then 
\begin{itemize}
\item If $L$ is ${\cal C}$-cheap, then  $L^*$ is ${\cal C}^*$-cheap.
\item If ${\cal C}$ is tight in $G,$ then ${\cal C}^{*}$ is tight in $G^*$.
\end{itemize}
\end{observation}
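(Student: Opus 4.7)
The plan is to establish a cost- and structure-preserving correspondence between the CL-configurations (respectively, cycles) of $G$ and those of $G^{*}$, so that both bullets reduce to bookkeeping. The starting observation is that every edge of $E^{\bullet}$ lies in $E(L)\cap\bigcup_i E(C_i)$, so $E^{\bullet}$ contributes zero to $c$. Since the edges of $L^{*}$ (respectively, of $C_i^{*}$) are in bijection with $E(L)\setminus E^{\bullet}$ (respectively, $E(C_i)\setminus E^{\bullet}$), a direct count gives $c(L)=c(L^{*})$, and the same identity applies to any other linkage of $G$ contracted along the edges of $E^{\bullet}$ that it uses.

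For the cheapness bullet, I would build a cost-preserving correspondence between CL-configurations of $G$ with cycle family ${\cal C}$ and CL-configurations of $G^{*}$ with cycle family ${\cal C}^{*}$. Going from $G$ to $G^{*}$ is immediate: contracting edges along each path of $L'$ yields a shorter path with the same (or identified) endpoints, so the image is a linkage of $G^{*}$ whose pattern is the image of the pattern of $L'$, and the condition ``pattern avoids $D_r$'' becomes ``pattern avoids $D_r^{*}$''. Going from $G^{*}$ to $G$ requires lifting: each vertex of $G^{*}$ arising from contraction corresponds to a subpath of a single cycle $C_i$ of $G$; whenever a linkage $M$ of $G^{*}$ enters and leaves such a vertex, I would route the lift along the appropriate arc of $C_i$, which uses only edges of $\bigcup_i E(C_i)$ and so is free with respect to $c$. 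It follows that the minimum value of $c$ across CL-configurations $({\cal C},\cdot)$ in $G$ equals the minimum across $({\cal C}^{*},\cdot)$ in $G^{*}$, so $L$ attains its minimum iff $L^{*}$ does.

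For the tightness bullet, contraction sends cycles of $G$ to cycles of $G^{*}$, because no cycle is swallowed: each connected component of $L\cap\cupall{\cal C}$ is contained in a single $C_i$ (the cycles in ${\cal C}$ are vertex-disjoint since they are concentric) and in a single path of $L$, hence is a subpath and acyclic. Conversely, cycles of $G^{*}$ lift to cycles of $G$ via the same subpath-rerouting as before. Since contracting an edge of a plane graph is a continuous map of $\Sigma$ compatible with the embedding, the closed interior of a cycle and that of its image correspond; hence $D_i\subsetneq D\subsetneq D_{i+1}$ in $G$ is equivalent to $D_i^{*}\subsetneq D^{*}\subsetneq D_{i+1}^{*}$ in $G^{*}$, and the surface-minimality of $D_0$ transfers identically. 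Thus a witness of non-tightness for ${\cal C}$ exists in $G$ iff a witness for ${\cal C}^{*}$ exists in $G^{*}$.

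The main obstacle is justifying that the lifting steps always produce genuine paths (no higher-degree branching) and simple cycles (no pinching), together with the exact correspondence of closed interiors in the plane. This is handled by the fact that each contracted component is a subpath of exactly one cycle $C_i$, so every incidence at the contracted vertex has a canonical lift along this subpath; the planarity of $G$ and the concentricity of ${\cal C}$ then ensure that the interior-containment relations survive contraction in both directions.
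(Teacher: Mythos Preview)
The paper gives no proof of Observation~\ref{obs1}; it is stated as a bare observation and used immediately in the proof of Lemma~\ref{DiscTheorem1}. Your sketch therefore goes well beyond what the paper offers, and the overall architecture---show $c(L)=c(L^{*})$, set up a cost-preserving correspondence of linkages (and of cycles) between $G$ and $G^{*}$, and check that disk-containment relations survive contraction---is the right one. The lifting direction (from $G^{*}$ to $G$, routing through a contracted vertex along the corresponding arc of some $C_i$) is correct, as is the tightness argument.

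The one step you pass over too quickly is the projection direction for cheapness. You write that ``going from $G$ to $G^{*}$ is immediate: contracting edges along each path of $L'$ yields a shorter path''. But the edges in $E^{\bullet}$ are determined by $L$, not by the competitor $L'$; an edge $e=\{u,v\}\in E^{\bullet}$ need not lie on $L'$ at all, and its endpoints may lie on two \emph{different} paths of $L'$. Contracting $e$ then makes those two paths collide at the new vertex, so the naive image of $L'$ in $G^{*}$ is not a linkage. This is the case your sentence does not cover. It can be repaired---for instance by first modifying $L'$ locally along the cycle arcs (at zero cost) so that every edge of $E^{\bullet}$ either lies on $L'$ or has at most one endpoint in $V(L')$, or by an induction on $|E^{\bullet}|$ that handles one contraction at a time---but it is not ``immediate''. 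Since the paper itself offers no argument, this is a point of detail rather than a divergence from the paper's method.
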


%
%

%
%

\begin{figure}[ht]
\begin{center}
\includegraphics[width=6.8cm]{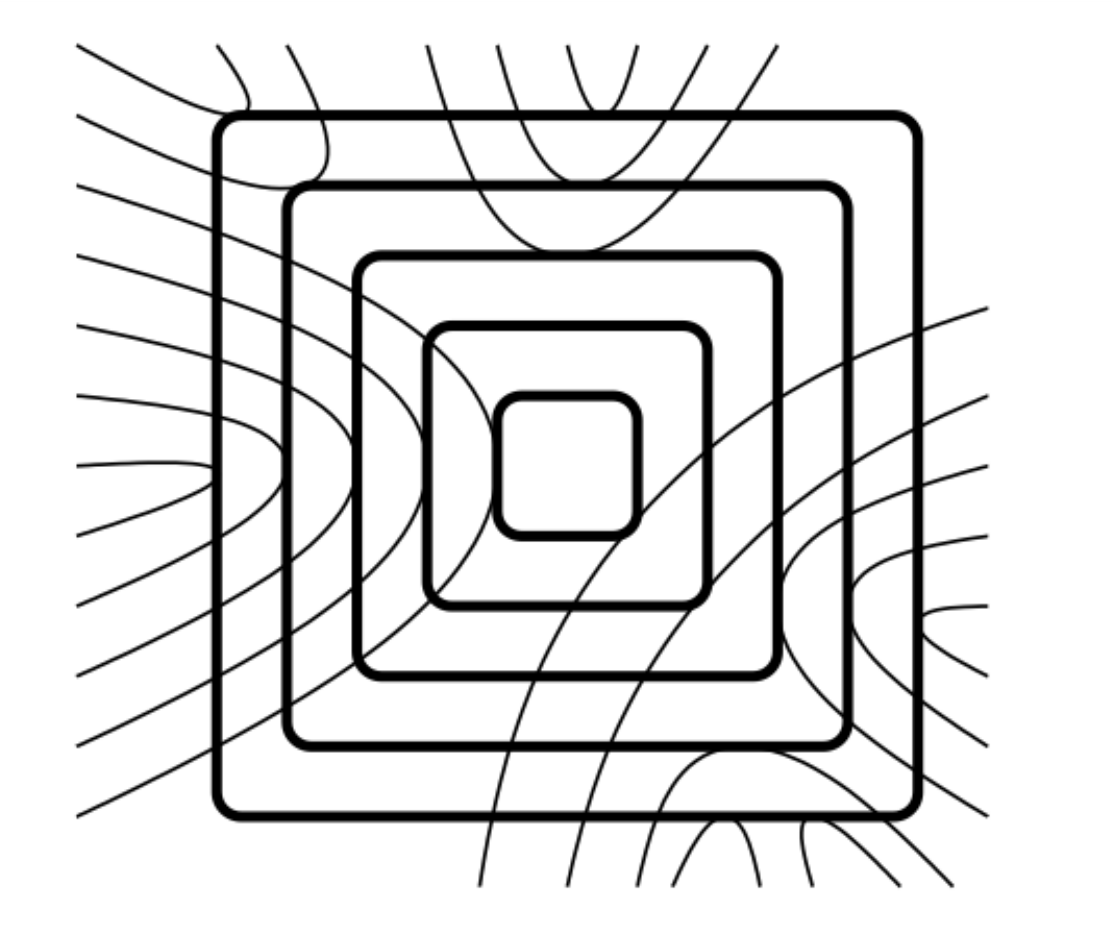}
\end{center}
\caption{An example of a convex CL-configuration $({\cal C},{L})$. In the picture,
only the 5 cycles in ${\cal C}$  and a cropped portion of $L$ is depicted.}
\label{fig:convex}
\end{figure}

\subsection{Convex configurations}

We introduce CL-configurations with particular 
characteristics that will be useful for the subsequent 
proofs. We then show that these characteristics are implied 
by tightness  and cheapness.

\paragraph{Convex CL-configurations.}
A segment $P$ of  ${\cal Q}$ is {\em convex}
if the following three conditions are satisfied:
\begin{itemize}
\item[(i)] it has no $0$-chord and 
\item[(ii)] for every $i\in\{1,\ldots,r\}$,  the following hold:
\begin{itemize}
\item[a.] $P$ has at most one $i$-chord 
\item[b.] if  $P$ has  an  $i$-chord, then $P\cap C_{i-1}\neq\varnothing$.
\item[c.] Each $i$-chord of $P$ has exactly two $i$-semichords.
\end{itemize}
\item[(iii)] If $P$ has eccentricity $i<r$, there 
is another segment inside the   zone of $P$
with eccentricity $i+1$.
\end{itemize}
We say ${\cal Q}$ is  {\em  convex}  if all its segments are convex.

\begin{observation}
\label{obs2}
Let ${\cal Q}=({\cal C},L)$ be a {CL-configuration} and let $(G^*,{\cal Q}^*=({\cal C}^*,L^*))$ be the reduced pair of $G$ and ${\cal Q}$. Then 
${\cal Q}$ is convex if and only ${\cal Q}^{*}$ is convex.
\end{observation}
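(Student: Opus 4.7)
The plan is to check, clause by clause, that each ingredient of the definition of convexity is preserved and reflected by the contraction operation producing ${\cal Q}^*$ from ${\cal Q}$. The key observation is that every edge $e\in L\cap \cupall {\cal C}$ lies on a unique cycle $C_j$ of ${\cal C}$ and on a unique path $P$ of ${\cal P}(L)$. Since $e\subseteq C_j$ and the cycles are concentric, we have $e\subseteq\int(D_i)$ for every $i>j$ and $e\cap\int(D_i)=\emptyset$ for every $i\le j$. Hence contracting $e$ only shortens $C_j$ and $P$, leaves every other cycle of ${\cal C}$ and every other path of $L$ intact, and does not change how $L$ meets the interiors of the discs $D_i$.

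Using this, I would first set up a canonical bijection between the segments of ${\cal Q}$ and the segments of ${\cal Q}^*$: each $D_r$-segment $P_0$ of $L$ is sent to its image $P_0^*$ after the contractions, which is a $D_r^*$-segment of $L^*$. Along this bijection eccentricity is preserved, because contraction identifies vertices rather than deleting them, so $V(C_i\cap P_0)\ne\emptyset$ iff $V(C_i^*\cap P_0^*)\ne\emptyset$. Next I would argue that the $i$-chords of $P_0$ correspond bijectively to the $i$-chords of $P_0^*$: both are the connected components of $P_0\cap\int(D_i)$, respectively of $P_0^*\cap\int(D_i^*)$, and by the observation above the contractions happen either on the boundary of $D_i$ or along a subpath of $P_0$ strictly inside $\int(D_i)$, neither of which alters the connected-component structure. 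The same argument, applied to $D_{i-1}$ inside an $i$-chord, yields a bijection between $i$-semichords.

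With these correspondences in place, conditions (i) and (ii.a)--(ii.c) transport directly: absence of $0$-chords, existence of at most one $i$-chord, non-empty intersection $P\cap C_{i-1}$, and the count of exactly two $i$-semichords per $i$-chord are all features shared by $P_0$ and $P_0^*$. For condition (iii) I would show that the zones of ${\cal Q}$ and the zones of ${\cal Q}^*$ correspond naturally under the contraction, since the only points collapsed lie on the boundary cycles or on $P_0$ itself, none of which belong to the interior of a zone. Hence a segment $P_0'$ of ${\cal Q}$ lies inside the zone of $P_0$ iff the corresponding segment of ${\cal Q}^*$ lies inside the zone of $P_0^*$; together with the preservation of eccentricity this yields condition (iii).

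The main technical nuisance I anticipate is establishing the zone correspondence used for condition (iii) rigorously: making ``$P_0'$ lies inside the zone of $P_0$'' well-behaved under contraction requires a careful argument about the plane embedding of $L\cup\cupall {\cal C}$ inside $D_r$, showing that contracting edges of this embedded planar graph does not alter the face structure in which the zones and the other segments are situated. Everything else reduces to routine bookkeeping along the bijection between segments.
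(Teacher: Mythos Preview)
Your plan is correct and essentially matches what the paper intends; the paper states Observation~\ref{obs2} without proof, treating it as immediate from the definitions, so your clause-by-clause verification through the natural bijection between segments, $i$-chords, $i$-semichords, and zones of ${\cal Q}$ and ${\cal Q}^*$ is exactly the routine unpacking the authors left implicit. The one point you flag as a ``technical nuisance''---that contracting edges of $L\cap\cupall{\cal C}$ does not disturb the zone structure---is indeed the only place where a little care is needed, but it follows from the fact that each contracted edge lies on the boundary of every zone it touches, so the complementary regions $D_r\setminus P$ are topologically unchanged.
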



%
%
%

%
%

%
%
%

%

%
%
%

\begin{figure}[ht]
\begin{center}
\includegraphics[width=5.6cm]{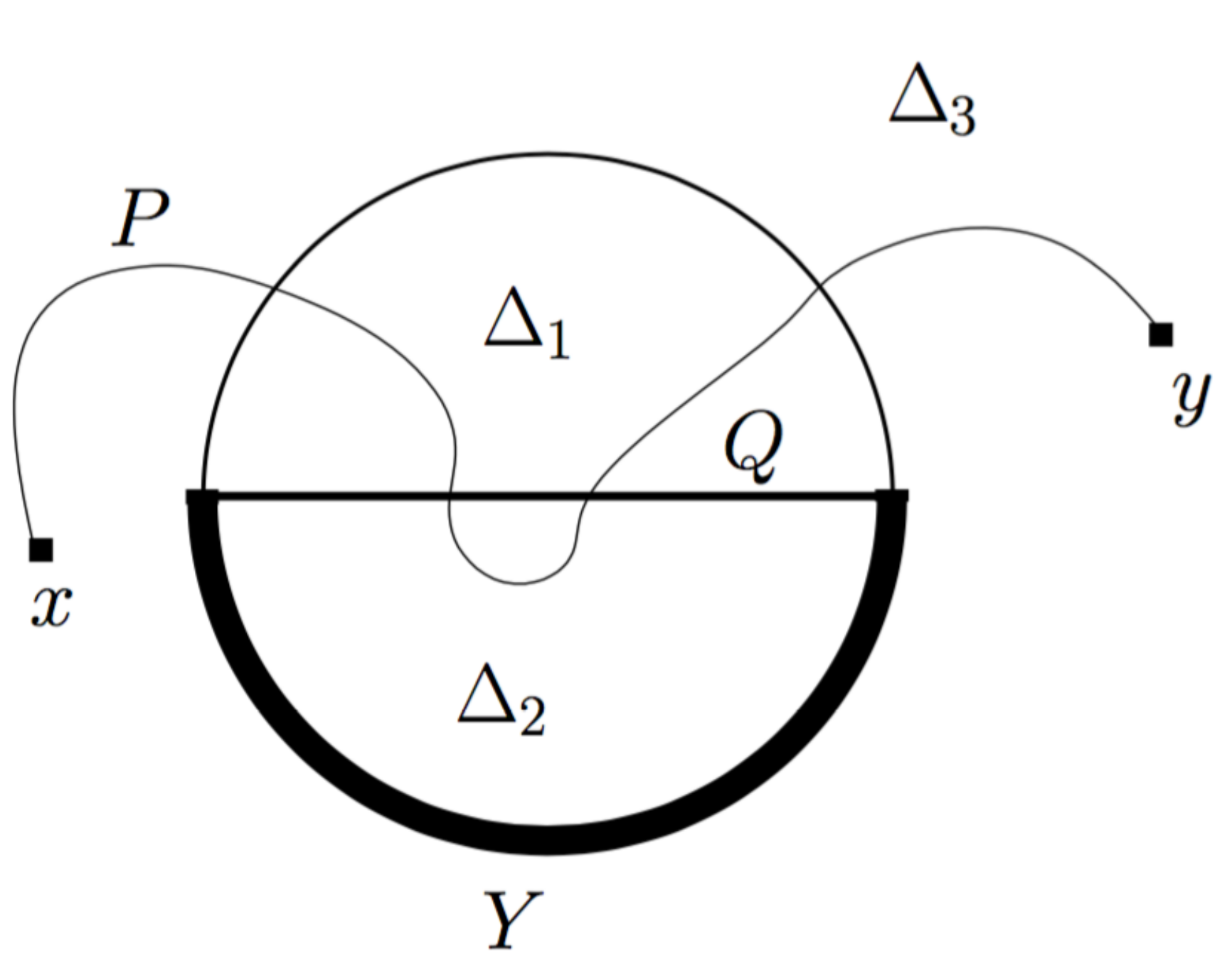}
\end{center}
\caption{A visualization of the conditions of Lemma~\ref{oi94jggdu3j}.}
\label{fig:prooftc}
\end{figure}

The proof of the  following lemma uses elementary topological arguments.

\begin{lemma}
\label{oi94jggdu3j}
Let $\Delta_{1},\Delta_{2}$ be closed disks of $\Bbb{R}^2$
where $\int(\Delta_{1})\cap\int(\Delta_{2})=\varnothing$ and 
such that $\Delta_{1}\cup \Delta_{2}$ is also a closed disk.
Let $\Delta_{3}=\Bbb{R}^{2}\setminus \int(\Delta_{1}\cup \Delta_{2})$ and let 
$Y=\bnd(\Delta_{3})\cap \Delta_{2}$ and $Q=\trim(\Delta_{1}\cap \Delta_{2})$.
Let $P$ be a closed arc of $\Bbb{R}^{2}$ whose endpoints 
are not in $\Delta_{1}\cup \Delta_{2}$ and such that
$Y\cap P=\varnothing$
and $Q\cap P\neq\varnothing$.
Then $\int(\Delta_{1})\cap P$ has at least two connected components.
\end{lemma}

 \begin{proof}
 Let $q$ be some point in $Q\cap P$.
 Let $Q'$ be an open arc that is a subset of $\int(\Delta_{1})$ and 
 has the same endpoints as $Y$. Notice that $q$ and $x$ belong to different open disks 
 defined by the cycle $Q'\cup Y$. Therefore $P$ should intersect $Q'$ 
or $Y$. As $Y\cap P=\varnothing$,  $P$ intersects $Q'$. As $Q' \subseteq \int(\Delta_{1})$,
 $\int(\Delta_{1})\cap P$ has at least one connected component.

Assume now that $\int(\Delta_{1})\cap P$ has exactly one connected component.
 Clearly, this  connected component will be an open arc $I$
 such that at least one of the endpoints of $I$, say $q$, belongs to $Q$.
 Moreover, there is a subset $P'$ of $P$ that is a closed arc where $P'\cap I=\varnothing$ 
 and whose endpoints are $q$ and one of $x$ and $y$, say $y$.
 As  $\int(\Delta_{1})\cap P$ has exactly one connected component,
 it holds that  $P'\cap \int(\Delta_{1})=\varnothing$. 
Let $Q'$ be an open arc that is a subset of $\int(\Delta_{1})$ and 
 has the same endpoints as $Y$. Notice that $q$ and $y$ belong to different open disks 
 defined by the cycle $Q'\cup Y$. Therefore $P'$ should intersect $\int(\Delta_{1})$ 
or $Y$, a contradiction as $P'\subseteq P$ and $Y\cap P=\varnothing$. 
 \end{proof}

%
%
%
%

\begin{lemma}
\label{DiscTheorem1}
Let $G$ be a plane graph and ${\cal Q}=({\cal C},L)$ be a {CL-configuration}
of $G$ where ${\cal C}$ is tight in $G$ and $L$ is ${\cal C}$-cheap. 
Then ${\cal Q}$ is  convex.
\end{lemma}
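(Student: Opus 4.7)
The plan is to argue by contradiction: suppose some segment $P$ of ${\cal Q}$ violates one of the conditions (i), (ii.a)--(ii.c), (iii), and derive a contradiction of one of the following two kinds:
\begin{enumerate}
\item[(A)] a linkage $L'$ with $({\cal C},L')$ a {CL}-configuration of $G$, equivalent to $L$, with $c(L')<c(L)$, contradicting ${\cal C}$-cheapness of $L$; or
\item[(B)] a cycle of $G$ witnessing the failure of tightness of ${\cal C}$.
\end{enumerate}
By Observations~\ref{obs1} and~\ref{obs2}, tightness, cheapness and convexity are preserved when passing to the reduced pair $(G^*,{\cal Q}^*)$, so I may assume throughout that ${\cal Q}$ is reduced, i.e., that $L\cap \cupall {\cal C}$ is edgeless. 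Hence every edge of $L$ not lying on some $C_i$ is actually \emph{no} edge of $L$ lies on any $C_i$, so $c(L)=|E(L)|$, and any rerouting that replaces interior edges of $L$ by arcs along some $C_i$ strictly decreases $c$.

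For condition (i): if $P$ has a $0$-chord $X$, then $\clos(X)\setminus X$ consists of two points on $C_0$; replacing $X$ by the (simpler) of the two arcs of $C_0$ joining these two points yields a curve $P'$ from the same endpoints. Because ${\cal C}$ is reduced, every edge of $X$ contributes to $c(L)$, while the replacement uses only edges of $C_0$ which do not. Planarity plus the fact that the other paths of $L$ avoid $D_r$ shows $L'=(L\setminus P)\cup P'$ is again a linkage with the same pattern; this gives contradiction~(A). The same rerouting template handles (ii.a): two distinct $i$-chords of $P$ can be short-circuited through an arc of $C_i$. It also handles (ii.c): an $i$-chord $X$ with a single $i$-semichord must be entirely disjoint from $D_{i-1}$, and together with (ii.a)+(ii.b) this leaves no room, while an $i$-chord with $\geq 3$ semichords can be rerouted along $C_{i-1}$ to strictly lower $c$.

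For condition (ii.b): suppose $P$ has an $i$-chord $X$ but $P\cap C_{i-1}=\emptyset$. Then $\clos(X)$ lies in the closed annulus bounded by $C_{i-1}$ and $C_i$ and meets $C_i$ in two points; the arc of $C_i$ between these two points, concatenated with a suitable $C_{i-1}$-avoiding deformation of $X$, bounds a closed disk $D$ with $D_{i-1}\subset D\subset D_i$, contradicting tightness of ${\cal C}$. (If $i=0$ the same construction, combined with surface-minimality of $D_0$, yields contradiction~(B).)

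For condition (iii), which I expect to be the main obstacle: suppose $P$ has eccentricity $i<r$ and the zone $Z$ of $P$ contains no segment of eccentricity $i+1$. Then the portion $C_{i+1}\cap Z$ is crossed by no segment of $L$, so one can push the part of $C_{i+1}$ lying in $\clos(Z)$ inward along the surface until it hugs $C_i\cup P$, producing a new cycle $C'$ strictly between $C_i$ and $C_{i+1}$ in the sense required for tightness. The care needed here is (a) to track the topology of $Z$ relative to $C_i$ and $C_{i+1}$ using Lemma~\ref{tosporlem} (applied with $\Delta_1,\Delta_2$ chosen so that $\Delta_1\cup\Delta_2$ corresponds to $D_{i+1}$ and $Q$ to the relevant boundary arc), which forces the pushed curve to stay inside $\int(D_{i+1})$ and strictly outside $D_i$, and (b) to exclude the degenerate case in which $C_{i+1}\cap Z=\emptyset$, which would already contradict concentricity. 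Combining these observations produces the desired cycle and hence contradiction~(B), completing the verification of all four conditions and thereby the lemma.
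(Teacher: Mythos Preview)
Your overall plan---reduce to the reduced configuration, then contradict either cheapness or tightness---matches the paper's, but you invert which hypothesis handles which condition, and the argument for (ii.a)/(ii.c) has a real gap. In the paper, (i) and (ii.b) follow from \emph{tightness} alone: a $0$-chord of $P$ together with an arc of $C_0$ is already a cycle of $G$ with closed interior strictly inside $D_0$, contradicting surface minimality. Your rerouting argument for (i) fails because ``the other paths of $L$ avoid $D_r$'' is simply false---only their \emph{terminals} do, so the arc of $C_0$ you want to use may well be crossed by other segments. Conversely, (iii) follows from \emph{cheapness}, not tightness: your ``push $C_{i+1}$ along the surface'' does not produce a cycle of $G$, and any cycle built from pieces of $P$ and $C_{i+1}$ touches $C_i$ (since $P$ has eccentricity $i$) and hence is not contained in $D_{i+1}\setminus D_i$. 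The correct move is to replace the $(i{+}1)$-chord of $P$ by the arc of $C_{i+1}$ lying in the closure of the zone---free because by hypothesis no segment in the zone meets $C_{i+1}$---thereby strictly lowering $c$.

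The decisive gap is in (ii.a) and (ii.c). Your ``short-circuit through an arc of $C_i$'' (or $C_{i-1}$) ignores that other segments may cross that arc, so $L'$ need not be a linkage. This is precisely the obstacle the paper's proof is designed around, and it is where Lemma~\ref{tosporlem} is actually used (not in (iii), where you invoke it). The paper argues by minimal counterexample: take the least $i$ at which (ii.a) or (ii.c) fails for some segment $W$; from $C_{i-1}$ together with the relevant $i$-semichords (Case~1) or the interior components of $W\setminus D_{i-1}$ (Case~2), build a biconnected outerplanar graph; pick a simplicial face $\Delta_2$ away from the two arcs of $W$ that reach $C_i$, and set $Q=C_{i-1}\cap\bnd(\Delta_2)$. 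Now either $L\cap\int(Q)=\emptyset$, in which case $W$ can be rerouted through $Q$ to lower $c$, or some segment $P$ meets $\int(Q)$, and Lemma~\ref{tosporlem} (applied with $\Delta_1=D_{i-1}$) forces $P\cap\int(D_{i-1})$ to have at least two components, i.e.\ $P$ has two $(i{-}1)$-chords---contradicting the minimality of $i$. This ``either the arc is free, or the blocker is itself bad one level down'' dichotomy is the idea missing from your proposal.
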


\begin{proof}
By Observations~\ref{obs1} and~\ref{obs2}, we may assume that ${\cal Q}$ is reduced.
Consider any segment of ${\cal Q}$. We show that it satisfies the three conditions
of  convexity. Conditions (i) and (ii).b follow directly from the
tightness of ${\cal C}.$ Condition (iii) follows from the fact that
$L$ is ${\cal C}$-cheap. 
In the rest of the proof we show Conditions (ii).a. and (ii).c.
For this, we consider the minimum $i\in\{0,\ldots,r\}$
such that one of these two conditions is violated.
From Condition  (i), $i\geq 1$.
Let $W$ be a segment of ${\cal Q}$ containing an $i$-chord $X$
for which one of Conditions (ii).a, (ii).c is violated.

%
%


\noindent We now define the set $Q$ according to which of the two conditions is violated.
We distinguish two cases:\medskip

\noindent{\em Case 1.} Condition (ii).c is violated. From  
Condition (ii).b, $X\setminus D_{i-1}$ contains 
more than two $i$-semichords of $X$.
Let $J_{1}$ be the biconnected outerplanar graph defined by the union of $C_{i-1}$
and the  $i$-semichords of $X$ that do not have an endpoint
on $C_{i}$. As there are at least three $i$-semichords in $X$,
$J_1$ has at least one internal edge
and therefore at least two simplicial  faces.
Moreover there are exactly two $i$-semichords of $X$, say $K_1$, $K_{2}$,
that have an endpoint in $C_{i}$ and $K_{1}$ and $K_{2}$  belong 
to the same, say $F'$, face of $J_{1}$.
Let $\Delta_{2}$ be the closure of a simplicial face of $J_{1}$ that is not $F'$.  \smallskip

\begin{figure}[ht]
\begin{center}
\includegraphics[width=13cm]{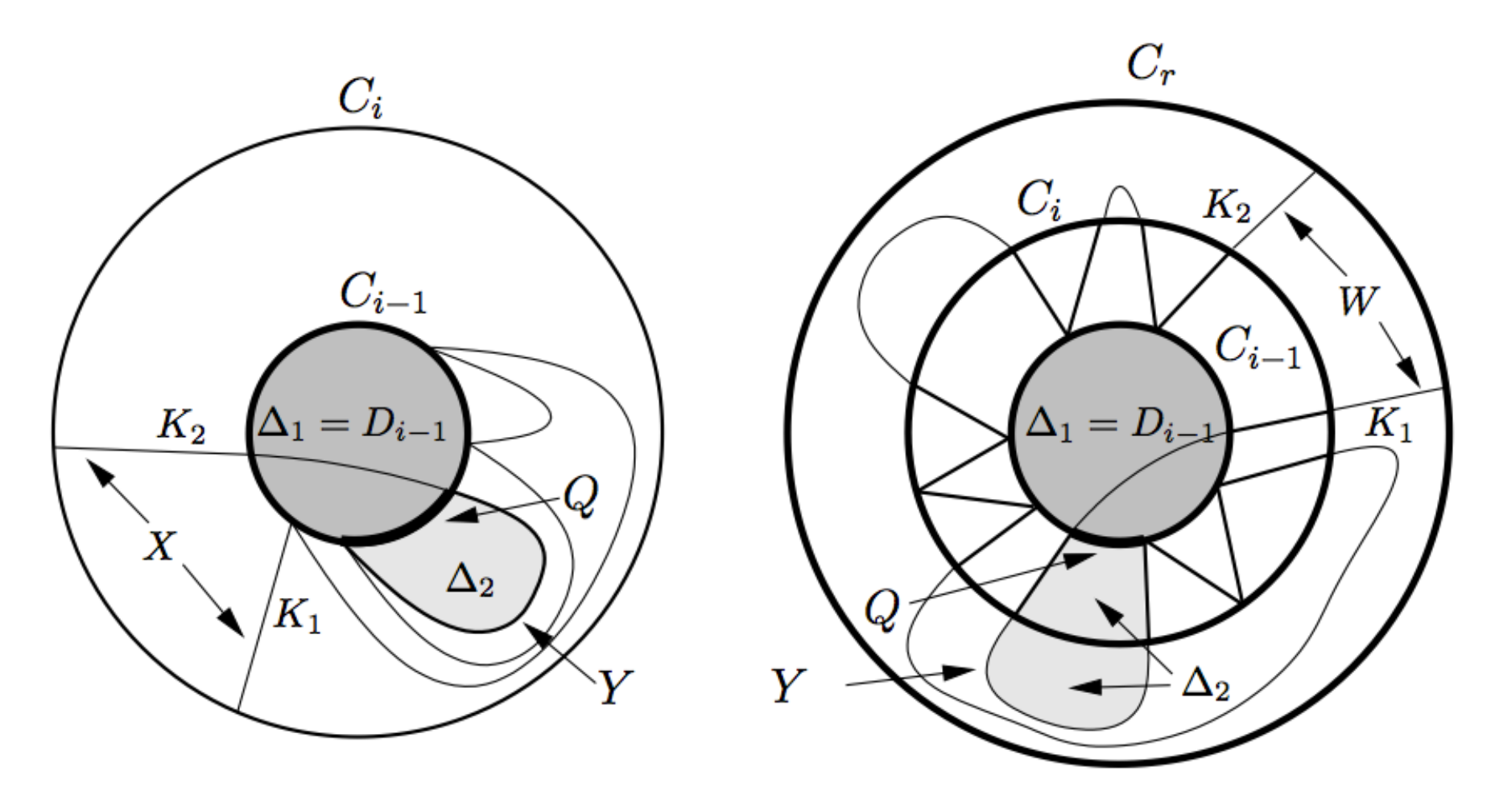}
\end{center}
\caption{The two cases of the proof of Lemma~\ref{DiscTheorem1}.
In the left part is depicted 
an $i$-chord $X$ that has 6 $i$-semichords
and in the right part is depicted a segment $W$ and the way it crosses the cycles $D_{i}$
and $D_{i-1}$. In the figure on the 
right the segment $W$ has $7$ $i$-chords and $14$ $i$-semichords.}
\label{fig:proofdt}
\end{figure}

\noindent{\em Case 2.}  Condition (ii).c holds while  Condition (ii).a is violated.
Let $J_{2}$ be the  biconnected outerplanar graph defined by the union 
of $C_{i-1}$ and the  connected components of $W\setminus D_{i-1}$
that do not contain  endpoints of $W$ in their boundary.
Notice that the rest of the connected components of $W\setminus D_{i-1}$
are exactly two, say $K_{1}$ and $K_{2}$. Notice that $K_{1}$ and $K_{2}$
are subsets of  the same face, say $F{'}$,
 of $J_{2}$.
As there are at least two $i$-chords in $W$, $J_{2}$
contains at least one internal edge and therefore at least two simplicial  faces.
Let $\Delta_{2}$ be the closure of a simplicial face of $J_{2}$ that is not $F'$.  \smallskip

%

In both of the above cases, we set $\Delta_{1}= D_{i-1}$, 
 $\Delta_3=\Bbb{R}^{2}\setminus \int(\Delta_{1}\cup \Delta_{2})$,  $Y=\bnd(\Delta_{3})\cap \Delta_{2}$,
and   $Q=\trim(\Delta_{1}\cap \Delta_{2})$.
Notice that $Y=\bnd(\Delta_{2})\setminus Q$,  therefore  $Y\subseteq W$.

We claim that  $L\cap Q\neq\varnothing$. Suppose not.
We consider $W'$ as the path in $W\cup Q$ that contains $Q$ as a subset and 
has the same endpoints  as $W$.
Then, $L'=(L\setminus W)\cup W'$ is a  linkage,  equivalent to $L$, where $c(L')<c(L)$, a contradiction to the fact that ${L}$ is ${\cal C}$-cheap. 
We just proved that $L\cap Q\neq\varnothing$ which in turn implies that $L$ contains a segment $P$ for which $P\cap Q\neq\varnothing$.
We distinguish two cases:\medskip

\noindent{\em Case A.} $W\ne P$.   This implies that $W\cap P=\varnothing$. As   $Y\subseteq W$, it follows that $Y\cap P=\varnothing$. Therefore, by Lemma~\ref{oi94jggdu3j}, 
$\int(\Delta_{1})\cap P$ has at least two connected components,
therefore $P$ has at least two $(i-1)$-chords. If $i>1$, 
then Condition (ii).a is violated for $i-1$, which contradicts the choice of $i$. If $i=1$, then $P$ has at least one $0$-chord, 
which violates Condition (i), that, as explained at the beginning
of the proof, holds for every segment of ${\cal Q}$.


\noindent{\em Case B.} $W= P$. Recall that $Y\subseteq W$ therefore $Y\subseteq P$. Let $p_{1}$ and $p_{2}$ be the endpoints 
of $Q$. As ${\cal Q}$ is reduced there exists two disjoint 
closed arcs $Z_{1}$ and $Z_{2}$ with endpoints $p_1,p_{1}'$
and $p_{2},p_{2}'$ respectively, such that 
\begin{itemize}
\item $p_{i}$ is an endpoint of $Z_{i}, i\in\{1,2\}$.
\item $Z_{i}\subseteq \clos(Q), i\in\{1,2\}$,
and 
\item $P\cap Z_{i}=\{p_{i}\}, i\in\{1,2\}$. 
\end{itemize}

Consider also a closed  arc $Y'$
that is a subset of $\int(\Delta_{2})\cup\{p_{1}',p_{2}'\}$ that does not intersect $L$ and whose endpoints are $p_{1}'$ and $p_{2}'$.
Let now $\Delta_{1}'=
\Delta_{1}$, let $\Delta_{2}'$ be the closed disk defined by 
the cycle $\clos(Q\setminus (Z_{1}\cup Z_{2}))\cup Y'$
that is a subset of $\Delta_{2}$. Let also $\Delta_{3}'=\Bbb{R}^{2}\setminus \int(\Delta_{1}'\cup \Delta_{2}')$ and $Q'=\trim(\Delta_{1}'\cap \Delta_{2}')$. 
As $Y'$ does not intersect $L$, we obtain $Y'\cap P=\varnothing$.
Observe that $Z_{1},Q',Z_{2}$ form a partition of $Q$. As $Q\cap P\neq\varnothing$ and $(Z_{i}\setminus\{p_i\})\cap P=\varnothing, i\in\{1,2\}$, we conclude that $Q'\cap P\neq\varnothing$.

By applying Lemma~\ref{oi94jggdu3j}, $\int(\Delta_{1}')\cap P$ has at least two connected components. Therefore $P$ has at least two $(i-1)$-chords. This yields a contradiction, as in Case A.
\end{proof}

\subsection{Bounding the number of extremal segments}

In this subsection we prove that the number of extremal segments 
is  bounded by a linear function of the  number of linkage paths.

\paragraph{Out-segments, hairs, and flying hairs.}
 Let $G$ be a plane graph and ${\cal Q}=({\cal C},L)$ be a {CL-configuration}
of $G$ of depth $r$.
An {\em out-segment} of $L$ is a subpath $P'$ of a path in ${\cal P}(L)$ such that 
the endpoints of $P'$ are in $C_{r}$ and the internal vertices of $P'$ are not in $D_{r}.$ 
A {\em hair} of $L$ is a subpath $P'$ of a path in ${\cal P}(L)$ such that 
one endpoint of $P'$ is in  $C_{r}$, the other is a terminal of $L,$  
and the internal vertices of $P'$ are not in $D_{r}.$
A {\em flying hair} of $L$ is a path in ${\cal P}(L)$ that does not intersect $C_{r}.$

Given a linkage $L$ of $G$ and a closed disk $D$ of $\Bbb{R}^{2}$ whose boundary is a cycle of $G$, we define 
${\bf out}_{D}(L)$ to be the graph obtained from the graph $(L\cup \bnd(D))\setminus\intr(D)$ after dissolving all vertices of degree 2.
For example ${\bf out}_{D_{r}}(L)$ is a plane graph consisting of the out-segments, the hairs,   the flying hairs of $L$, and what results 
from  $C_{r}$ after dissolving its vertices of degree 2  that do not belong in $L$.
Let $f$ be a face of ${\bf out}_{D_{r}}(L)$ that is different from $\intr(D_{r}).$ We say that  $f$ is a {\em cave of} ${\bf out}_{D_{r}}(L)$ if 
the union of the out-segments and extremal segments in the boundary of $f$ is a connected set. Recall that a segment of  ${\cal Q}$ is {extremal} 
if it is has  eccentricity $r$, i.e., it is a subpath of $C_{r}.$

Given a plane graph $G$, we say that two edges $e_{1}$ and $e_{1}$ are {\em cyclically adjacent} if they have a common endpoint $x$ and appear consecutively in the cyclic
ordering of the edges incident to $x$, as defined by the embedding of $G$.
A subset $E$  of $E(G)$ is {\em cyclically connected} if for every two edges $e$ and $e'$ in $E$ 
there exists a sequence of edges $e_{1},\ldots,e_{r}\in E$ where $e_{1}=e$, $e_{r}=e'$ and for 
each $i\in\{1,\ldots,r-1\}$\ $e_{i}$ and $e_{i+1}$ are cyclically adjacent.

Let  ${\cal Q}=({\cal C},L)$ be a {CL-configuration}. We say that ${\cal Q}$
is {\em touch-free} if for every path $P$ of $L$, the number of the connected components
of $P\cap C_{r}$ is not $1$.

\begin{lemma}
\label{boundsegextr}
Let $G$ be a plane graph and ${\cal Q}=({\cal C},L)$ be a touch-free {CL-configuration}
of $G$ where ${\cal C}$ is tight in $G$ and $L$ is ${\cal C}$-cheap. The number of extremal segments of ${\cal Q}$ is at most $2\cdot|{\cal P}(L)|-2$.
\end{lemma}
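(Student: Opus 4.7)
The plan is to apply Euler's formula to a planar multigraph derived from ${\bf out}(L)$, and to combine it with face-length bounds drawn from ${\cal C}$-cheapness and terminal-count bounds coming from touch-freeness.

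First, by Observations~\ref{obs1} and~\ref{obs2}, pass to the reduced pair; since the reduction makes $L\cap\cupall{\cal C}$ edgeless and an extremal segment is a subgraph of $C_{r}$, each extremal segment becomes a single vertex on $C_{r}$. Let $E$ denote the number of extremal segments and $p=|{\cal P}(L)|$. Build a planar multigraph $N$ on the sphere whose vertices are the vertices of $L$ that lie on $C_{r}$ (extremal-segment vertices plus endpoints of non-extremal segments) together with the terminals of $L$ that lie outside $D_{r}$, and whose edges are (i) the arcs of $C_{r}$ between consecutive such vertices, so that $C_{r}$ is a cycle in $N$, and (ii) each out-segment, hair, and flying hair of $L$ contracted to a single edge.

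Second, apply Euler's formula to $N$ on the sphere to express $|F(N)|$ in terms of $|V(N)|$, $|E(N)|$, and the number of connected components of $N$ (each flying hair contributes one additional component). The face $\intr(D_{r})$ is bounded exactly by the cycle $C_{r}$; every other face of $N$ lies outside $D_{r}$ and is bounded by some combination of $C_{r}$-arcs, out-segments, hairs, and flying hairs.

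The key claim is that ${\cal C}$-cheapness forbids length-two faces of $N$ outside $\intr(D_{r})$. Such a face could only be bounded by (a) two parallel out-segments, which is impossible because two out-segments connecting the same pair of endpoints would belong to the same path of $L$ and would force a vertex repetition contradicting simplicity, or (b) an out-segment together with an arc of $C_{r}$ whose interior contains no vertex of $L$, in which case rerouting the out-segment along that $C_{r}$-arc produces an equivalent linkage of strictly smaller $c$-value, contradicting the ${\cal C}$-cheapness of $L$. Hence every face of $N$ outside $\intr(D_{r})$ has length at least three. The touch-free hypothesis then guarantees that every path of $L$ with at least one extremal segment meets $C_{r}$ in at least two components, so that it contributes at least two pendant hair-edges (with their terminals) to $N$.

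Finally, use the standard double-count $\sum_{f\in F(N)}\operatorname{length}(f)=2|E(N)|$, separating the contributions of $\intr(D_{r})$ (whose boundary is the full $C_{r}$-cycle), of the bounded outer pockets (each of length at least three by the cheap claim), and of the unbounded face of $N$ (along whose boundary each pendant hair is traversed twice by touch-freeness). Substituting the face-count from the Euler relation and rearranging yields the desired bound $E\leq 2\cdot|{\cal P}(L)|-2$.

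The principal obstacle is obtaining the tight constant $2(p-1)$ rather than a weaker planar bound such as $3p-6$ or $2p$: the unbounded face of $N$ must be isolated from its bounded siblings, and the exact contribution of each path's pair of hairs to the outer structure must be matched against the length-three lower bound on bounded pockets so that the "$-2$" emerges in the final estimate.
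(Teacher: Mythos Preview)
Your opening moves are sound—passing to the reduced pair and building the auxiliary graph $N$ from ${\bf out}(L)$ is natural, and your cheapness argument ruling out length-two outer faces (out-segment versus $C_r$-arc) is correct. The gap is in the final accounting. Carrying out your plan explicitly: with $E$ extremal segments, $N_s$ non-extremal segments, $p'$ non-flying paths, and $O=E+N_s-p'$ out-segments (from the degree count on $C_r$), the graph $N$ restricted to $C_r$-arcs and out-segments has $E+2N_s$ vertices, $E+2N_s+O$ edges, hence $2+O$ faces. One face is $\intr(D_r)$, leaving $1+E+N_s-p'$ outer faces whose lengths sum to $3E+4N_s-2p'$. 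Imposing length $\ge 3$ on each outer face yields only $N_s+p'\ge 3$; the variable $E$ cancels entirely. Adding the hairs back (each contributing $+2$ to the length of whatever face it sits in) does not change the number of faces, and your assertion that touch-freeness forces every hair onto a single distinguished ``unbounded'' face is neither proved nor true in general—a hair can perfectly well sit inside a pocket bounded by out-segments of another path. So the promised rearrangement that makes the ``$-2$'' emerge is never actually performed, and the sketch as written does not prove $E\le 2|{\cal P}(L)|-2$.

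The paper's argument is genuinely different and supplies exactly the finer structure your Euler bound lacks. It removes the hairs to obtain a biconnected outerplanar graph $J^-$, whose weak dual is a tree; the out-segments form a forest $\Psi$ of paths, and the degree-$4$ vertices of $J^-$ (the set $S$) are precisely the internal vertices of $\Psi$. The crucial step is Claim~1: every \emph{cave} (an outer face whose out-segment boundary is connected) must contain an invading hair, by a cheapness rerouting along the $C_r$-portion of the cave's boundary—this is a localized and sharper use of cheapness than your global length-$3$ bound. Combined with an induction on the components of $\Psi$ bounding the number of non-cave faces (Claim~2), this yields $|S|\le|T_1|-2$, and hence $E=|T_2|+|S|\le|T_1|+|T_2|-2\le 2|{\cal P}(L)|-2$. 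The ``$-2$'' comes from the tree structure of the dual (leaves versus internal high-degree vertices), not from a face-length inequality; that is the idea your proposal is missing.
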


\begin{proof}
Let $(G^*,{\cal Q}^*=({\cal C}^*,L^*))$ be the reduced pair of $G$ and ${\cal Q}$.
Notice that, by Observation~\ref{obs1},  
${\cal C}^*$ is tight in $G$ and $L^*$ is ${\cal C}^*$-cheap.
Moreover, it is easy to see that ${\cal Q}^*$ is touch-free and ${\cal Q}$ and ${\cal Q}^{*}$ have the same number of extremal segments which are all trivial paths (i.e., paths consisting of  only one vertex).
Therefore, it is sufficient to prove that the lemma holds for ${\cal Q}^{*}$. 
Let $\rho$ be the number of extremal segments of ${\cal Q}^*$.

Let $J={\bf out}_{D^{*}_{r}}(L^*)$ and 
$k=|{\cal P}(L^*)|.$  Notice that the number of extremal segments of ${\cal Q}^*$ is equal to the number of vertices of degree 4 in $J$.
%
%
%

%
%
%
%
%
The terminals of $L^*$ are partitioned in three families
\begin{itemize}
\item {\em flying} terminals, $T_{0}$: endpoints of flying hairs.
\item {\em invading} terminals $T_{1}$: these are endpoints of hairs whose non terminal endpoint has degree 3 in  $J$.
\item {\em  bouncing} terminals $T_{2}$:  these are endpoints of hairs whose non terminal endpoint has degree 4 in $J$.
\end{itemize}
\medskip

\noindent A hair containing an invading and bouncing terminal is  called {\em invading}  and {\em bouncing hair} respectively.

Recall that $|T_{0}|+|T_{1}|+|T_{2}|= 2k.$\smallskip

\noindent{\em Claim 1.} The number of caves of $J$ is at most the number of invading terminals. \smallskip

\noindent{\em Proof of claim 1.} 
Clearly, a hair cannot be in the common boundary of two caves. Therefore it is enough to prove 
that the set obtained by the union of a cave $f$ and its boundary  contains at least one invading hair. Suppose this is not true.  
Consider the open arc $R$  obtained if we remove from $\bnd(f)$ all the points that belong to 
out-segments. 
Clearly, $R$ results from a subpath $R^+$ of $C_{r}^*$ 
after removing its endpoints, i.e., $R=\trim(R^{+})$. 

Notice that because $f$ is a cave,  $R$ is a non-empty connected subset of $C_{r}^*$. Moreover, $R\cap L^*$ is non-empty, otherwise 
$L^{*\prime}=(L^*\setminus(\bnd(f))\cup R$ is also a linkage with the same pattern as $L^*$
where $c(L^{*\prime})<c(L^*),$ a contradiction to the fact that  $L^*$ is ${\cal C}^*$-cheap.
Let $Y$ be a connected component of $R\cap L^*$. As ${\cal Q}^{*}$ is reduced, $Y$
consists of a single vertex $y$ in the  open set $R$. Notice that $Y$ is a subpath of 
a segment $Y'$ of ${\cal Q}^*$. We claim that $Y'$ is not extremal. 
Suppose to the contrary that  $Y'$ is extremal.  Then $Y'=Y$ and there should be two distinct out-segments that have $y$ as a common endpoint. This contradicts the fact that $y\in R$.

By Lemma~\ref{DiscTheorem1}, ${\cal Q}^*$ is convex, therefore one of the endpoints of the non-extremal segment $Y'$ is $y$ and thus is  in $R$ as well. 
This means that $y$ is the endpoint of one out-segment which
again contradicts the fact that $y\in R$.
%
%
%
This completes the proof of  Claim~1.\medskip

%

\begin{figure}[ht]
\begin{center}
\includegraphics[width=14.6cm]{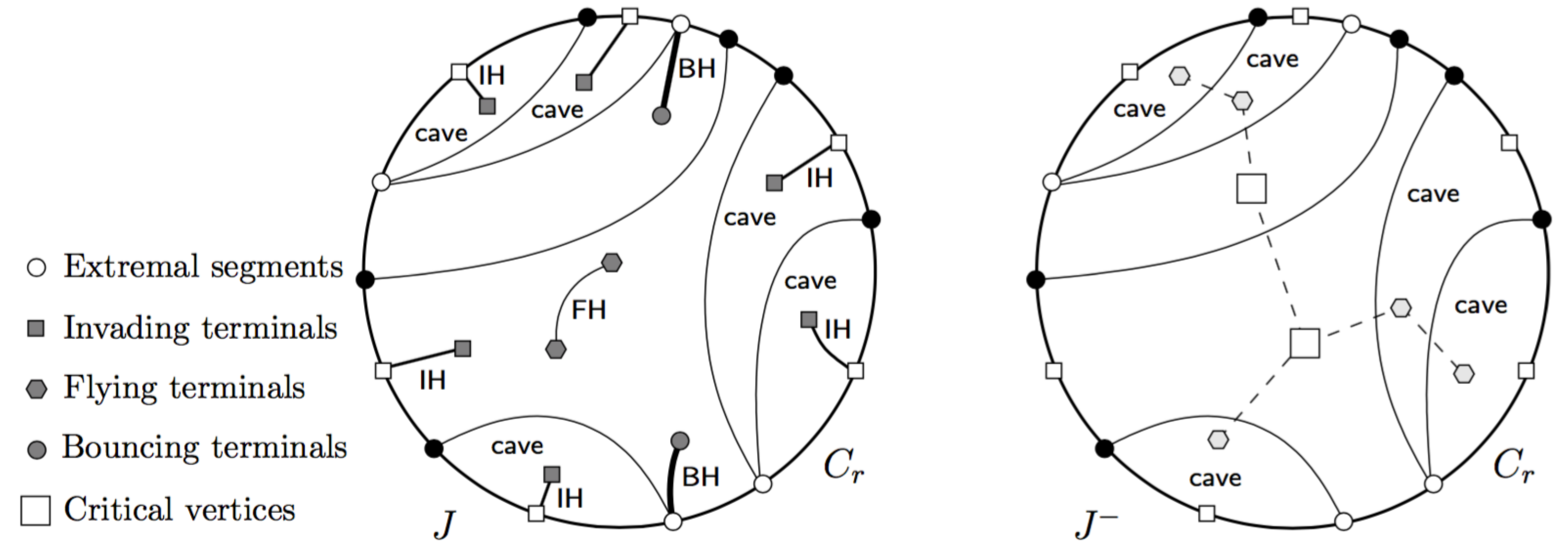}
\end{center}
\caption{Examples of the graphs $J$ and $J^-$
in the proof of Lemma~\ref{boundsegextr} (the outer face in the picture
corresponds to the interior of $D_{r}$). The faces that are 
caves contain the word {\sf\small cave}. {\sf\small FH}: flying hair, {\sf\small BH}: bouncing hair, {\sf\small IH}: invading hair.
The forest $\Psi=J^-\setminus E(C_{r})$ has 6 edges and 4 connected  components.
The weak dual $T$ of $J^{-}$ is depicted with dashed lines. The large white square 
vertices are the rich vertices of  $T$.}
\label{fig:prsosoftsc}
\end{figure}

Let $J^-$ be the graph obtained from $J$ by removing all hairs
and notice that $J^-$ is a biconnected outerplanar graph.
Let $S$ be the set of vertices  of $J^{-}$ that have degree $4$.
Notice that, because ${\cal Q}^*$ is touch-free, $|S|$ is
equal to the number of vertices of $J$ that have degree $4$ minus the 
number of bouncing terminals. Therefore,
\begin{eqnarray}
\rho=|T_{2}|+|S|. \label{tlllokkd}
\end{eqnarray}
Notice that  if we remove from $J^{-}$ all the edges of $C_{r}^*,$ the resulting 
graph is a forest $\Psi$ whose connected components are paths.  Observe that none of these paths is a trivial path because ${\cal Q}^*$
is {touch-free}.
We denote by $\kappa(\Psi)$
the number of connected components of $\Psi$.
Let $F$ be the set of faces of $J^-$ that are different from $D_{r}^*$.
$F$ is  partitioned  
into the faces that are caves, namely $F_{1}$ 
and the non-cave faces, namely $F_{0}.$
By the Claim 1,  $|F_1|\leq |T_{1}|.$

To complete the proof, it  is enough to show that 
\begin{eqnarray}
|S| & \leq & |T_{1}|-2 \label{thsisosnge}
\end{eqnarray}
Indeed the truth of~\eqref{thsisosnge} along with~\eqref{tlllokkd}, would imply 
that $\rho$ is at 
most $|T_{2}|+|S|\leq |T_{2}|+|T_{1}|-2\leq |T|-2=2k-2.$\medskip

We now return to the proof of~\eqref{thsisosnge}. For this, we need two more claims.\medskip

\noindent{\em Claim 2}: $|F_{0}|\leq \kappa(\Psi)-1$.

\noindent{\em Proof.} We use  induction on $\kappa(\Psi).$ Let $K_{1},\ldots,K_{\kappa(\Psi)}$
be the connected components of $\Psi$.
If $\kappa(\Psi)=1$ then all faces in $F$ are caves, therefore $|F_0|=0$ and we are done.
Assume now that $\Psi$ contains at least two connected components.

We assert that there exists at least one connected component $K_{h}$ of $\Psi$
with the property that only one non-cave face of $J^-$ contains edges of $K_{h}$ in its boundary.
To see this, consider the weak dual  $T$ of $J^-$. Recall that, as $J^-$ is biconnected, $T$ is a tree.
Let  $K_{i}^{*}$ be the subtree of $T$ containing the duals of the edges in $E(K_{i})$, $i\in\{1,\ldots,\kappa(\Psi)\},$ and observe that 
$E(K_{1}^{*}),\ldots,E(K_{\kappa(\Psi)}^{*})$ 
is a partition of $E(T)$ 
into $\kappa(\Psi)$ cyclically connected sets.
We say that a vertex of $T$ is {\em rich} if it is incident 
with edges in more than one members of $\{K_{1}^*,\ldots,K_{\kappa(\Psi)}^*\}$, otherwise 
it is called {\em poor} (see Figure~\ref{fig:prsosoftsc}). 
Notice that a vertex of $T$ is rich if and only if its dual face in $J^{-}$
is a non-cave.
We call a subtree  $K_{i}^{*}$ {\em peripheral} if $V(K_{i}^{*})$  contains at most one rich vertex of $T$.
Notice that the claimed property for a component in $\{K_{1},\ldots,K_{\kappa(\Psi)}\}$
is equivalent to the existence of a peripheral subtree in  $\{K_{1}^*,\ldots,K_{\kappa(\Psi)}^*\}$.
To prove that such a peripheral subtree exists, consider a path $P$ in $T$ intersecting the vertex sets of 
a maximum number of  members of $\{K_{1}^*,\ldots,K_{\kappa(\Psi)}^*\}$.
Let $e^*$ be the first edge of $P$ and let 
$K_{h}^{*}$ be the unique subtree whose edge set  contains $e^*$. Because of the maximality of 
the choice of $P$, $V(K_{h}^{*})$ contains exactly 
one rich vertex $v_{h}$, therefore $K_{h}^{*}$ is peripheral and the assertion follows.
We denote by $f_{h}$ the non-cave face of $J^{-}$ that is the dual of $v_{h}$.\medskip


Let $H^-$ be the  outerplanar graph obtained from $J^-$
after removing the edges of $K_{h}$. Notice that this removal 
results in the unification of all faces that are incident to the 
edges of $K_{h},$ including $f_{h}$, to  a single face $f^+$.
By the inductive hypothesis 
the number of non-cave faces of $H^{-}$ is at most $\kappa(\Psi)-2$.
Adding back the edges of $K_{h}$ in $J^-$ restores $f_{h}$ 
as a distinct non-cave face of $J^-$. If $f^+$ was a non-cave of $H^-$
then $|F_{0}|$ is equal to the number of non-cave faces of $H^-$, else
$|F_{0}|$ is one more than this number. In any case,
$|F_{0}|\leq \kappa(\Psi)-1$, and the claim follows.\medskip

\noindent{\em Claim 3}:  $|V(\Psi)|\leq |T_{1}|+2 \cdot \kappa(\Psi)-2.$ 

\noindent{\em Proof.}
Let $T$ be the weak dual of $J^-$.
Observe that $ |F_{0}|+|F_{1}|=|F|=|V(T)|=|E(T)|+1=|E(\Psi)|+1=|V(\Psi)|- \kappa(\Psi)+1.$
Therefore $|V(\Psi)|= |F_{0}|+|F_{1}|+ \kappa(\Psi)-1.$ Recall that, by Claim 1, $|F_{1}|\leq |T_{1}|$
and, taking into account Claim 2, we conclude that  $|V(\Psi)|\leq  |T_{1}|+2 \cdot \kappa(\Psi)-2$. Claim 3 follows.\medskip

Notice now that a vertex of $J^-$ has degree 4 iff it is an internal vertex of some path in $\Psi$.
Therefore,  as all connected components of $\Psi$ are non-trivial paths, it holds that $|V(\Psi)|=|S|+|L(\Psi)|= |S|+2\cdot \kappa(\Psi)$, where  $L(\Psi)$ is the 
set of leaves of $\Psi$.
 By Claim 3,
$$|S|+2\cdot \kappa(\Psi)= |V(\Psi)|\leq |T_{1}|+2 \cdot \kappa(\Psi)-2 \Rightarrow |S|\leq |T_{1}|-2.$$
Therefore,~\eqref{thsisosnge} holds and this completes the proof of the lemma.
\end{proof}

\subsection{Bounding the number and size of segment types}
\label{subsec:bounds}

In this section we introduce the notion of segment type
that partitions the segments into classes of mutually ``parallel''
segments.  We next prove that, in the light of the results of the 
previous section, the number of these classes is bounded by a 
linear function of the number $k$ of linkage paths.
In Subsections~\ref{k7iope4} and~\ref{llt38f}
we show that if one of these equivalence classes 
has size more than $2^{k}$, then an   equivalent 
cheaper linkage can be found. All these facts will be 
employed in the culminating Subsection~\ref{subsec:irre}
in order to prove that a cheap linkage 
cannot go very ``deep'' into the cycles of a cheap CL-configuration. 
That way we will be able to quantify the depth at which 
an irrelevant vertex is guaranteed to exist.

\paragraph{Types of segments.}
Let $G$ be a plane graph and 
let ${\cal Q}=({\cal C},L)$ be a  convex CL-configuration of 
$G$. 
Let $S_{1}$, $S_{2}$  be two segments of ${\cal Q}$
and let $P$ and $P'$ be the two paths on $C_r$
connecting an endpoint of $S_{1}$ with an endpoint of $S_{2}$ 
and passing through no other endpoint of $S_1$ or $S_2.$
We say that $S_{1}$ and $S_{2}$ are {\em parallel}, and 
we write $S_1\parallel S_2$, if
\begin{enumerate}
\item[(1)] no segment of ${\cal Q}$ has both endpoints on $P.$
\item[(2)] no  segment of ${\cal Q}$ has both
endpoints on $P'.$ 
\item[(3)] the closed-interior of  the cycle $P\cup S_{1}\cup P'\cup S_{2}$ does not contain
the disk $D_{0}$.
\end{enumerate} 
A \emph{type of segment} is an equivalence class of segments of ${\cal Q}$ under the relation $\parallel.$ 
%
%

%

Given a linkage $L$ of $G$ and a closed disk $D$ of $\Bbb{R}^{2}$ whose boundary is a cycle of $G$, we define 
${\bf in}_{D}(L)$ to be the graph obtained from $(L\cup\bnd(D))\cap D$ after dissolving all vertices of degree 2.

Notice  that ${\bf in}_{D_{r}}(L)$
is the biconnected outerplanar graph  formed if we dissolve 
all vertices of degree 2 in the graph that is formed by the union of $C_{r}$ and the 
segments of ${\cal Q}$. As ${\cal Q}$ is convex,
one of the faces of ${\bf in}_{D_{r}}(L)$ contains the interior of $D_{0}$ and we call this face {\em central} face.
We define the {\em segment tree} of ${\cal Q}$, denoted by $T({\cal Q}),$ 
as follows. 
\begin{itemize}
\item Let $T^-$ be the weak dual of  ${\bf in}_{D_{r}}(L)$ 
rooted
at the vertex that is the
dual of the central face.
\item Let $Q$ be the set of leaves of $T^-$.
For each vertex $l\in Q$ do the following:
Notice first that $l$  is the dual of a face $l^*$ of ${\bf in}_{D_{r}}(L)$.
Let $W_{1},\ldots,W_{\rho_{l}}$ be the extremal segments in the boundary of $l^*$ (notice that, by the convexity of ${\cal Q}$, for every $l$,\ $\rho_{l}\geq 1$).
Then, for each $i\in\{1,\ldots,\rho_{l}\}$, create a new leaf $w_{i}$ corresponding to the extremal segment $W_{i}$ and make it adjacent to $l$.
\end{itemize}
The {\em height} of $T({\cal Q})$ is the maximum distance 
from its root to its leaves. The {\em real height}
of $T({\cal Q})$ is the maximum number of internal vertices of degree at least $3$
in a path from its root to its leaves plus one. The {\em dilation}
of $T({\cal Q})$ is the maximum length of a path all whose internal vertices 
have degree 2 and are different from the root.

\begin{observation}
\label{ogsdll5dfk}
Let $G$ be a plane graph and 
let ${\cal Q}=({\cal C},L)$ be a  convex CL-configuration of 
$G$.  Then the dilation of $T({\cal Q})$ is equal to the maximum cardinality of an equivalence class of $||$.
\end{observation}

\begin{observation}
\label{ogll5fk}
Let $G$ be a plane graph and 
let ${\cal Q}=({\cal C},L)$ be a  convex CL-configuration of 
$G$.  Then the height of $T({\cal Q})$ is upper bounded by the  dilation of $T({\cal Q})$
multiplied by  the real height of $T({\cal Q})$.
\end{observation}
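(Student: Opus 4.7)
The plan is to establish Observation~\ref{ogll5fk} as a purely combinatorial fact about rooted trees that does not exploit any of the planarity or $\mbox{\rm CL}$-configuration structure behind $T({\cal Q})$: for any rooted tree $T$, its height is bounded by the product of its real height and its dilation. Applied to $T({\cal Q})$ this gives the statement.

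First I would fix a root-to-leaf path $P$ in $T({\cal Q})$ that realizes the height, say from the root $r$ to a leaf $\ell$. I would designate as \emph{breakpoints} of $P$ the two endpoints $r$ and $\ell$ together with every vertex of $P$ whose degree in $T({\cal Q})$ exceeds $2$. Walking along $P$ from $r$ to $\ell$, these breakpoints split $P$ into a sequence of consecutive subpaths $P_{1},\ldots,P_{t}$, each joining two consecutive breakpoints. By the choice of breakpoints, every internal vertex of each $P_{i}$ has degree at most $2$ in $T({\cal Q})$ (and, being internal to a tree path, in fact exactly $2$) and is distinct from the root. Hence each $P_{i}$ is a path of the very kind whose length is measured by the dilation, so $|E(P_{i})|\leq d$, where $d$ denotes the dilation of $T({\cal Q})$.

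It then remains to bound $t$. Let $\sigma$ be the number of vertices of degree $>2$ lying on $P$. The number of breakpoints of $P$ is at most $\sigma+2$ in case the root has degree $\leq 2$ (the $\sigma$ degree-$>2$ vertices together with the endpoints $r$ and $\ell$), and at most $\sigma+1$ in case the root itself has degree $>2$ (since then $r$ is already counted in $\sigma$; the leaf $\ell$ has degree $1$ and is never counted in $\sigma$). In either case the number of subpaths satisfies $t\leq \sigma+1$, and the definition of real height gives $\sigma+1\leq h$, where $h$ is the real height of $T({\cal Q})$. Combining, the length of $P$ equals $\sum_{i=1}^{t}|E(P_{i})|\leq t\cdot d\leq h\cdot d$, which is the claimed bound on the height of $T({\cal Q})$.

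No step in this plan looks like it will be a real obstacle: the argument is a straightforward breakpoint-counting decomposition, and the only mild subtlety is to avoid double-counting the root when it happens to have degree $>2$, which the case split above handles cleanly.
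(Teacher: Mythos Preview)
Your proof is correct; the paper states this as an observation without proof, and the breakpoint-decomposition argument you give is exactly the natural elementary justification one would supply. There is nothing to compare against, and your handling of the root-degree case split is clean.
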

%
%
%

\begin{figure}[ht]
\begin{center}
\includegraphics[width=12.5cm]{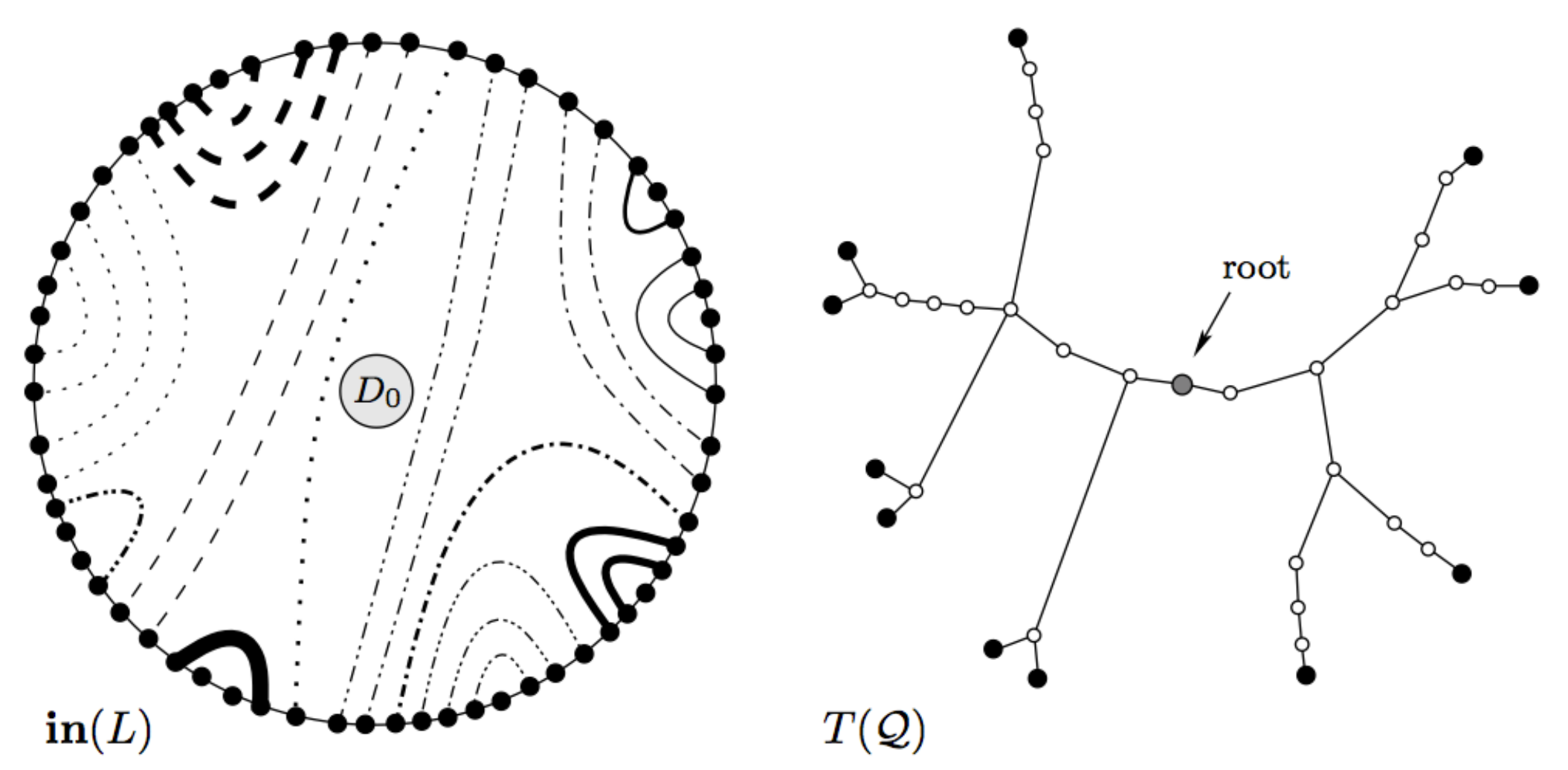}
\end{center}
\caption{The graph ${\bf in}_{D_{r}}(L)$ for some convex CL-configuration ${\cal Q}=({\cal C},L)$
and the tree $T({\cal Q})$. Internal edges in ${\bf in}_{D_{r}}(L)$ of the same type are drawn as lines of the same type. ${\cal Q}$ has 11 extremal segments, as many as the leaves of $T({\cal Q})$. The relation $\parallel$ has 19 equivalent classes.
The dilation of $T({\cal Q})$ is 4, its height is 8 and its real height is 4.}
\label{fig:inL}
\end{figure}

%

The following lemma is an immediate consequence of Lemma~\ref{boundsegextr}
and the definition of a segment tree.
The condition that  $L\cap C_{r}\neq\varnothing$ simply requires that the CL-configuration that we consider is non-trivial in the sense that the linkage $L$ enters the closed disk $D_{r}$.

\begin{lemma}\label{theo:bounding_number_of_types}
Let $G$ be a plane graph and ${\cal Q}=({\cal C},L)$ be a touch-free {CL-configuration}
of $G$ where ${\cal C}$ is tight in $G$, $L$ is ${\cal C}$-cheap, and $L\cap C_{r}\neq\varnothing$. 
Then ${\cal Q}$ is convex and the real height of the segment tree $T({\cal Q})$
is at most $2\cdot |{\cal P}(L)|-3.$
\end{lemma}

\begin{proof}
Certainly, the convexity of ${\cal Q}$ follows directly from Lemma~\ref{DiscTheorem1}. 
We examine the non-trivial case where $T({\cal Q})$ contains at least one edge.
We first claim that  $|{\cal P}(L)|\geq 2$. 
Assume to the contrary that $L$ consists of a single path $P$. As ${\cal Q}$ is convex
and $L\cap C_{r}\neq\varnothing$, 
${\cal Q}$  has at least one extremal segment. Suppose now that ${\cal Q}$  has more than one extremal segment all of which  are connected components  of $C_{r}\cap P$. 
Let $P_{1}$ and $P_{2}$
be the closures of the connected components of 
$L\setminus D_{r}$  that contain the terminals of $P$. Let $p_{i}\in V(C_{r})$ be the endpoint of $P_{i}$ that is not a terminal, $i\in\{1,2\}$. Let also $P'$ be any path in $C_{r}$ between $p_{1}$ and $p_{2}$. Notice now that $P_{1}\cup P'\cup P_{2}$ is a cheaper 
linkage with the same 
pattern as $L$, a contradiction to the fact that $L$ is ${\cal C}$-cheap. Therefore we conclude that ${\cal Q}$ has exactly one extremal segment, which contradicts  the fact that ${\cal Q}$ is touch-free.   This completes the proof that  $|{\cal P}(L)|\geq 2$.

Recall that, by the construction of $T({\cal Q})$ there is a 1--1 correspondence between the leaves of $T({\cal Q})$
and the extremal segments of ${\cal Q}$.
From Lemma~\ref{boundsegextr}, $T({\cal Q})$
has at most $2\cdot |{\cal P}(L)|-2$ leaves.
Also $T({\cal Q})$ has at least $2$ leaves, because ${\cal Q}$ is touch-free.
It is known that the number of internal vertices of degree $\geq 3$ in a tree with 
$r\geq 2$ leaves is at most $r-2$. Therefore,  $T({\cal Q})$ has at most 
$2\cdot |{\cal P}(L)|-4$ internal vertices of degree $\geq 3$.
Therefore the real height  of $T({\cal Q})$ is at most $2\cdot |{\cal P}(L)|-3$.
%
\end{proof}

\subsection{Tidy grids in convex configurations}
\label{k7iope4}

In this subsection we prove that the existence of many ``parallel'' segments 
implies the existence of a big enough grid-like structure.

\paragraph{Topological minors.}
We say that a graph $H$ is a {\em topological minor} of a graph $G$ 
if there exists an injective function $\phi_{0}: V(H)\rightarrow V(G)$ 
and a function $\phi_{1}$ mapping the edges of $H$ to paths of $G$
such that 
\begin{itemize}
\item 
for every edge $\{x,y\}\in E(H)$, $\phi_{1}(\{x,y\})$ is a path
between $\phi_0(x)$ and $\phi_0(y)$.
\item if two paths in $\phi_{1}(E(H))$ have a common vertex, then this vertex should 
be an endpoint of both paths.
\end{itemize}
Given the pair $(\phi_0,\phi_1)$, we say that $H$ is a {\em topological 
minor of $G$ via} $(\phi_0,\phi_1)$.

\paragraph{Tilted grids and $L$-tidy grids.}
Let $G$ be a graph. A {\em tilted grid} of $G$ 
is a pair ${\cal U}=({\cal X},{\cal Z})$ where ${\cal X}=\{X_{1},\ldots,X_{r}\}$ 
and ${\cal Z}=\{Z_{1},\ldots,Z_{r}\}$ 
are both collections of $r$ vertex-disjoint paths of $G$ such that 
\begin{itemize}
\item for each $i,j\in\{1,\ldots,r\}$ $I_{i,j}=X_{i}\cap Z_{j}$ is a (possibly 
edgeless) path of 
$G$,
\item for $i\in\{1,\ldots,r\}$ the subpaths $I_{i,1},I_{i,2},\ldots,I_{i,r}$ appear 
in this order in $X_{i}$.
\item for $j\in\{1,\ldots,r\}$ the subpaths $I_{1,j},I_{2,j},\ldots,I_{r,j}$ appear 
in this order in $Z_{j}$.
\item $E(I_{1,1})=E(I_{1,r})=E(I_{r,r})=E(I_{r,1})=\varnothing$,
\item Let $$G_{{\cal U}}=(\bigcup_{i\in\{1,\ldots,r\}}X_{i})\cup (\bigcup_{i\in\{1,\ldots,r\}}Z_{i})$$
and let  $G_{{\cal U}}^*$ be the graph taken from the graph
  after contracting all edges in $\bigcup_{(i,j)\in\{1,\ldots,r\}^{2}}I_{i,j}$.
 Then $G_{{\cal U}}^*$  contains the $(r\times r)$-grid $\Gamma$
 as a topological minor via a pair $(\chi_{0},\chi_{1})$ such that 
\begin{itemize}
\item[A.] the upper left (resp. upper right, down right, down left) corner 
of $\Gamma$ is mapped via $\chi_{0}$ to the  (single) endpoint
of $I_{1,1}$  (resp. $I_{1,r},I_{r,r},$ and $I_{r,1}$).
\item[B.] $\bigcup_{e\in E(\Gamma)}\chi_{1}(e)=G_{{\cal U}}^*$ (this makes $G_{{\cal U}}^*$ to be a subdivision of $\Gamma$).
\end{itemize}
\end{itemize}
We call the subgraph $G_{{\cal U}}$  of $G$ {\em realization} of the tilted grid ${\cal U}$
and the graph $G_{\cal U}^{*}$ {\em representation} of ${\cal U}$. We treat both $G_{\cal U}$
and $G_{\cal U}^{*}$ as plane graphs.
We also refer to the cardinality $r$ of ${\cal X}$ (or ${\cal  Z}$) as the {\em capacity} of ${\cal U}$.
The {\em  perimeter} of $G_{\cal U}$ is the cycle $X_{1}\cup Z_{1}\cup X_{r}\cup Z_{r}$.
Given a graph $G$ and a linkage $L$ of $G$ we say that 
a tilted grid ${\cal U}=({\cal X},{\cal Z})$ of $G$ is an {\em $L$-tidy tilted grid} of $G$ if
$D_{\cal U}\cap L=\cupall {\cal Z}$ where $D_{\cal U}$ is the closed-interior of the perimeter of $G_{\cal U}$.

\begin{figure}[ht]
\begin{center}
\includegraphics[width=10.1cm]{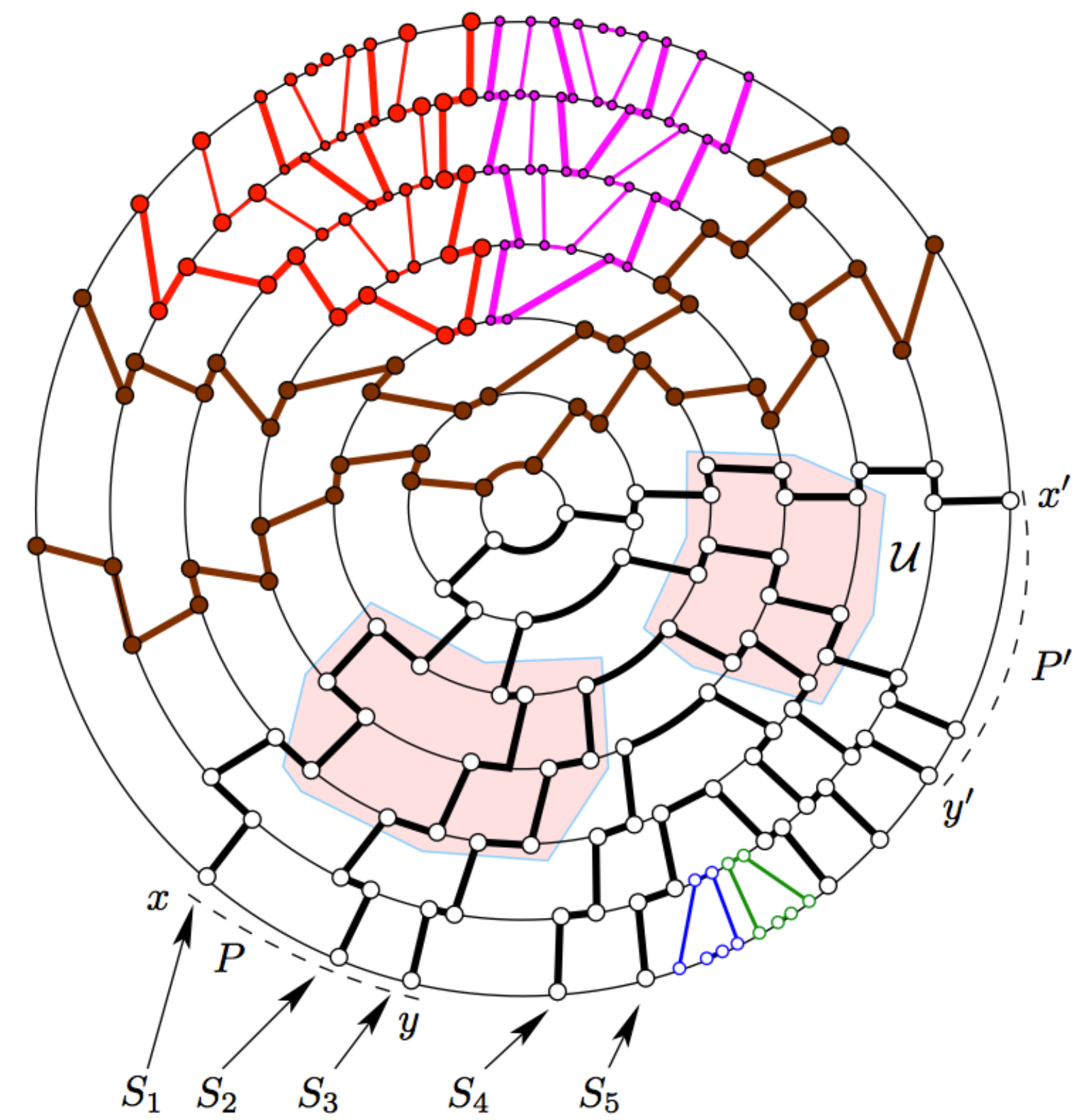}
\end{center}
\caption{A visualisation of the proof of Lemma~\ref{theo:bounding_more}.
It holds that 
$\sigma_{1}=0$ and $\sigma_{5}=4$, $m=5$, $m'=3$.
The two shadowed regions indicate the two connencted components 
of $D_{S}\cap A_{C}$.}
\label{fsig:thiss2sones}
\end{figure}

\begin{lemma}\label{theo:bounding_more}Let $G$ be a plane graph and 
let ${\cal Q}=({\cal C},L)$ be a  convex CL-configuration of 
$G$. 
Let also ${\cal S}$ be an equivalence class of the relation $\parallel$.
Then $G$ contains a tilted grid ${\cal U}=({\cal X},{\cal Z})$ of capacity $\lceil |{\cal S}|/2\rceil$
that is an $L$-tidy tilted grid of $G$.
\end{lemma}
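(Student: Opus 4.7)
My plan is to construct the tilted grid ${\cal U}=({\cal X},{\cal Z})$ explicitly: the vertical paths ${\cal Z}$ will come from the parallel segments of ${\cal S}$, and the horizontal paths ${\cal X}$ from arcs of the concentric cycles in ${\cal C}$. The factor $\lceil|{\cal S}|/2\rceil$ in the capacity will come from using only every other segment of ${\cal S}$, so that the intermediate ones can be placed on or outside the perimeter of the grid and the $L$-tidy condition $D_{\cal U}\cap L=\cupall{\cal Z}$ can be met.

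First I would enumerate ${\cal S}=\{S_1,\ldots,S_m\}$ in the cyclic order of their endpoints on $C_r$. Because segments of a linkage cannot cross inside $D_r$, and by condition~(3) of the parallel relation, the $S_i$'s stack as nested ``$U$-shapes'' on one side of $D_0$. A preliminary observation I would prove is the \emph{cleanness} of the rings between consecutive parallel segments: if some other segment $T$ of ${\cal Q}$ had an endpoint on one of the two short arcs of $C_r$ between $S_i$ and $S_{i+1}$, a direct verification of the three parallel conditions against $S_i$ would force $T\in{\cal S}$, contradicting the enumeration (no endpoint of a segment of ${\cal S}$ lies in the interior of these short arcs). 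Consequently, no segment of $L$ other than $S_i,S_{i+1}$ meets the closed region bounded by $S_i$, $S_{i+1}$ and the two short arcs of $C_r$.

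Setting $\ell=\lceil m/2\rceil$, I then take the ${\cal Z}$ paths to be the odd-indexed segments $S_1,S_3,\ldots,S_{2\ell-1}$ and route the perimeter of the grid so that each skipped even-indexed segment $S_{2i}$ lies on the boundary of $D_{\cal U}$ rather than in its open interior. The ${\cal X}$ paths are built from arcs of the concentric cycles inside the enclosed disk: Lemma~\ref{DiscTheorem1} (convexity) guarantees that each $Z_i$ meets each relevant cycle $C_j$ in a single $j$-chord with exactly two $j$-semichords, so appropriately chosen cycle arcs concatenate into $\ell$ pairwise disjoint horizontal paths meeting $Z_1,\ldots,Z_\ell$ in the correct order. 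Contracting the intersections $I_{i,j}=X_j\cap Z_i$ then yields the required $(\ell\times\ell)$-grid $G_{\cal U}^*$, and by construction ${\cal U}$ is $L$-tidy.

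The main obstacle is the careful topological routing of the perimeter that places the skipped segments and any extraneous pieces of $L$ outside the open interior of $D_{\cal U}$. Cleanness of the rings rules out foreign segments from entering the enclosed region, convexity controls the behaviour of each $Z_i$ at every cycle level (so the horizontal arcs really do cross each $Z_i$ exactly once, in order), and ${\cal C}$-cheapness prevents wasteful detours of $L$ along the cycles that would otherwise force extraneous pieces inside $D_{\cal U}$. The parity factor $\lceil m/2\rceil$ is essentially forced by the need to leave one segment per consecutive ``pair'' accessible to the perimeter.
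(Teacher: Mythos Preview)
Your proposal has a genuine gap: the mechanism you propose for obtaining $L$-tidiness does not work, and your explanation of where the factor $\lceil |{\cal S}|/2\rceil$ comes from is incorrect.

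You take ${\cal Z}=\{S_1,S_3,\ldots,S_{2\ell-1}\}$ and claim that the skipped even-indexed segments $S_{2i}$ can be ``placed on or outside the perimeter of the grid''. But by your own nesting description, $S_{2i}$ lies strictly between $S_{2i-1}$ and $S_{2i+1}$, both of which you have declared to be vertical paths of ${\cal U}$. Any closed disk $D_{\cal U}$ bounded by the perimeter $X_1\cup Z_1\cup X_\ell\cup Z_\ell$ that contains all of $Z_1,\ldots,Z_\ell$ must contain (the relevant portion of) $S_{2i}$ in its interior, between $Z_i$ and $Z_{i+1}$. Hence $D_{\cal U}\cap L\neq\cupall{\cal Z}$ and ${\cal U}$ is not $L$-tidy. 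Skipping alternate segments makes the tidiness problem worse, not better. You also invoke ${\cal C}$-cheapness, which is not among the hypotheses of the lemma; only convexity is assumed.

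The paper's argument is quite different and rests on a property of convexity you do not use: condition~(iii) forces the eccentricities of the segments in ${\cal S}$ to be \emph{consecutive} integers. Ordering ${\cal S}=\{S_1,\ldots,S_m\}$ so that $S_i$ has eccentricity $\sigma^{\min}+i-1$, the paper keeps the \emph{first} (deepest) $m'=\lceil m/2\rceil$ segments as ${\cal Z}$ and discards the shallow half entirely. The tilted grid is then cut out of one of the two components of $D_S\cap A_C$, where $D_S$ is the disk between $S_1$ and $S_{m'}$ and $A_C$ is the annulus between $C_{\sigma^{\min}+m'-1}$ and $C_{\sigma^{\max}}$. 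The point is that every $S_i$ with $i\leq m'$ crosses all cycles in $A_C$ (giving the $m'\times m'$ grid), while $S_{m'+1},\ldots,S_m$ lie in the zone of $S_{m'}$ and hence \emph{outside} $D_S$, so they never enter $\Delta$; together with the parallel-class conditions this yields $L$-tidiness. Thus the $1/2$ factor comes from balancing ``enough segments reaching a given depth'' against ``enough cycles at that depth'', not from any alternation trick.
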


\begin{proof}
Let ${\cal C}=\{C_{0},\ldots,C_r\}$ and let ${\cal S}=\{S_{1},\ldots,S_{m}\}$.
For each $i\in\{1,\ldots,m\}$, 
let $\sigma_{i}$ be the eccentricity of $S_{i}$
and let $\sigma^{\rm max}=\max\{\sigma_{i}\mid i\in\{1,\ldots,m\}\}$
and $\sigma^{\rm min}=\min\{\sigma_{i}\mid i\in\{1,\ldots,m\}\}$.
Convexity allows us to assume that 
$S_{1},\ldots,S_{m}$ are ordered in a way that 
\begin{itemize}
\item $\sigma_{1}=\sigma^{\rm min}$, 
\item $\sigma_{m}=\sigma^{\rm max}$, and
\item for all $i\in\{1,\ldots,m-1\}$, $\sigma_{i+1}=\sigma_{i}+1$.
\item for all $i\in\{1,\ldots,m\}$, $I_{i,\sigma_{i}}=S_{i}\cap C_{\sigma_{i}}$ is a subpath of $C_{\sigma_{i}}$.
\end{itemize}
Let $m'=\lceil\frac{m}{2}\rceil$ and let $x,x'$ (resp. $y,y'$) be the endpoints of 
the path $S_{1}$ (resp. $S_{m'}$) such that the one of the two $(x,y)$-paths (resp. $(x',y')$-paths) in $C_{r}$ contains both $x',y'$ ($x,y$) and the other, say $P$ (resp. $P'$),
contains none of them. Let $D_{S}$ be the closed-interior of the cycle $S_{1}\cup P'\cup S_{m'}\cup P$.
Let also $A_{C}$ be the closed annulus defined by the cycles $C_{\sigma^{\rm max}-(m'-1)}$
and $C_{\sigma^{\rm max}}$. Let $\Delta$ be any of the two connected components of $D_{S}\cap A_{C}$.
We now consider the graph
$$(L\cup\cupall {\cal C})\cap \Delta.$$
It is now easy to verify  that the above graph is the realization $G_{\cal U}$ 
of a tilted grid ${\cal U}=({\cal X},{\cal Z})$ of capacity $m'$, where 
the paths in ${\cal X}$ are 
the portions of the cycles $C_{\sigma^{\rm max}-(m'-1)},\ldots,C_{\sigma^{\rm max}}$
cropped by $\Delta$, while the paths in ${\cal Z}$ 
are the portions of the paths in $\{S_{1},\ldots,S_{m'}\}$
cropped by $\Delta$ (see Figure~\ref{fsig:thiss2sones}).
As ${\cal S}$ is an equivalence class of $\parallel$, it follows that ${\cal U}$ is  $L$-tidy, as required. 
\end{proof}

\subsection{Replacing linkages by cheaper ones}

\label{llt38f}

In this section we prove that a linkage $L$ of $k$ paths 
can be rerouted 
to a cheaper one, given  
the existence of an $L$-tidy tilted grid 
of capacity greater than $2^{k}$.
Given that $L$ is a cheap linkage, this will 
imply an exponential upper bound on the capacity of 
an  $L$-tidy tilted grid.

\medskip

Let $G$ be a plane graph and let $L$ be a linkage in $G$.
Let also $D$ be a closed disk in the surface where 
$G$ is embedded. We say that $L$ {\em crosses vertically}
$D$ if the outerplanar graph defined by the boundary 
of $D$ and $L\cap D$ has exactly two simplicial faces.
This naturally partitions the vertices of $\bnd(D)\cap L$ 
into the {\em up} and {\em down} ones. 
The following proposition is implicit 
in the proof of 
Theorem~2 in~\cite{AdlerKT16plan} (see the derivation 
of the unique claim in the proof of the former theorem). 
See also~\cite{EricksonN11shor} for related results.

\begin{proposition}
\label{taken_from_IPEC_11_paper}
Let $G$ be a plane graph and let $D$ be a closed disk and a linkage $L$
of $G$ of order $k$ that crosses $D$ vertically. Let also $L\cap D$ consist
of $r> 2^k$ lines.
Then there is a collection ${\cal N}$ of strictly less than $r$ mutually non-crossing lines in $D$
each connecting two points of $\bnd(D)\cap L,$ 
such that there exists some linkage  $R$ that is a subgraph of  $L\setminus \int(D)$
such that  $R\cup \cupall {\cal N}$ is a  linkage 
of the graph $(G\setminus D)\cup\cupall {\cal N}$ that is equivalent to $L$. 
\end{proposition}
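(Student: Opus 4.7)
The plan is to use a pigeonhole argument on the $r$ lines of $L\cap D$. The vertical crossing assumption lets us linearly order these lines as $\ell_1,\ldots,\ell_r$ from left to right, each $\ell_i$ joining an ``up'' endpoint $u_i$ on the upper part of $\bnd(D)$ to a ``down'' endpoint $d_i$ on the lower part. Each $\ell_i$ is a subpath of a unique path of $L$ and hence carries a label $c_i\in\{1,\ldots,k\}$. For each prefix index $i\in\{0,1,\ldots,r\}$, define a signature $\sigma_i\in\{0,1\}^k$ whose $t$-th coordinate records the parity of the number of lines among $\ell_1,\ldots,\ell_i$ with label $t$. Since $r+1>2^k$, two signatures must coincide: there exist $0\le a<b\le r$ with $\sigma_a=\sigma_b$, and necessarily $b-a\geq 2$ because consecutive signatures differ by exactly one bit. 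In the block $B=\{\ell_{a+1},\ldots,\ell_b\}$ each label occurs an even number of times.

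Next, I would exploit the planar embedding of $L\setminus\int(D)$, which lives entirely in the outer region $\Sigma\setminus\int(D)$. The external portions of $L$ induce a non-crossing pattern on the points of $\bnd(D)\cap L$; combined with the even-occurrence property inside the block, I expect to show that the up-endpoints $u_{a+1},\ldots,u_b$ can be grouped into $(b-a)/2$ disjoint pairs, each joining two endpoints of the same path, realised by mutually non-crossing arcs inside $D$. Add each such arc to ${\cal N}$. For every line $\ell_i$ with $i\notin\{a+1,\ldots,b\}$, add a straight-through arc in $D$ joining $u_i$ to $d_i$. The resulting family is mutually non-crossing of total size $(r-(b-a))+(b-a)/2=r-(b-a)/2\leq r-1$, so $|{\cal N}|<r$ as required.

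Finally, I would define $R$ as the subset of $L\setminus\int(D)$ obtained by discarding those external portions of $L$ between down-endpoints of lines in $B$ that become disconnected from the terminals once the top endpoints of $B$ are re-paired by the new arcs. One then verifies that $R\cup\cupall{\cal N}$ has the same pattern as $L$ by tracing each original path: whenever the path enters $D$ through an up-endpoint $u_i$ with $i\in\{a+1,\ldots,b\}$, the new arc paired to $u_i$ reroutes it to another up-endpoint $u_j$ of the same path, and the discarded external portions between the corresponding down-endpoints are precisely the bypassed segments.

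The main obstacle is the middle step: establishing the existence of the desired non-crossing, label-respecting pairing of $u_{a+1},\ldots,u_b$. The parity-balance property is necessary but not a priori sufficient; for example, a block whose label sequence is $1,2,1,2$ is parity-balanced but admits no non-crossing same-label pairing. Overcoming this will require exploiting the non-crossing chord structure of the external connections (a consequence of planarity of $G\setminus\int(D)$), and possibly refining the choice of $a,b$ so that the internal label sequence is well-nested. This combinatorial step is the technical heart of the argument and is tied closely to the planar embedding of $G$ around $D$.
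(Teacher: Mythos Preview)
The paper does not contain a proof of this proposition: it is imported verbatim from~\cite{AdlerKT11plan} and used as a black box, so there is no ``paper's own proof'' to compare against here.

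As to your proposal itself, the pigeonhole on parity vectors is a natural opening move, but the gap you yourself flag is genuine and is not a minor technicality. Your rerouting scheme commits to pairing only the \emph{up}-endpoints $u_{a+1},\ldots,u_b$ by label, keeping every other line straight through. The $1,2,1,2$ block you mention is not a pathological outlier you can hope to exclude by ``refining the choice of $a,b$'': the exterior of $D$ in the plane is not simply connected, so the non-crossing constraint on the external pieces of $L$ does not force the label word inside a parity-balanced block to be well-nested. Concretely, with circular boundary order $u_1,u_2,u_3,u_4,d_4,d_3,d_2,d_1$, an external out-segment of $P_1$ from $d_1$ to $d_3$ and one of $P_2$ from $d_2$ to $d_4$ are compatible with planarity outside $D$ (the second can go around $D$), producing exactly the $1,2,1,2$ obstruction to your up-only pairing. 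So restricting the new arcs to live on the up side is too rigid; any correct rerouting must be allowed to introduce both up--up and down--down arcs and to choose which external pieces to keep on a per-segment basis.

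What this means is that the invariant you pigeonhole on has to carry more information than the $k$ parity bits. A workable alternative is to record, at each cut between $\ell_i$ and $\ell_{i+1}$, the full connection pattern that the outside pieces of $L$ together with $\ell_1,\ldots,\ell_i$ induce among the terminals and the two cut points; one then argues this pattern takes at most $2^k$ values and that equality of patterns at two cuts lets you splice the right-of-$b$ configuration onto the left-of-$a$ configuration, discarding the $b-a$ lines in between entirely (rather than trying to re-pair their endpoints). That is the shape of argument used in~\cite{AdlerKT11plan}; your parity vector is a shadow of this richer state, and the shadow is too coarse to support the surgery you describe.
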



\begin{figure}[ht]
\begin{center}
\includegraphics[width=8.3cm]{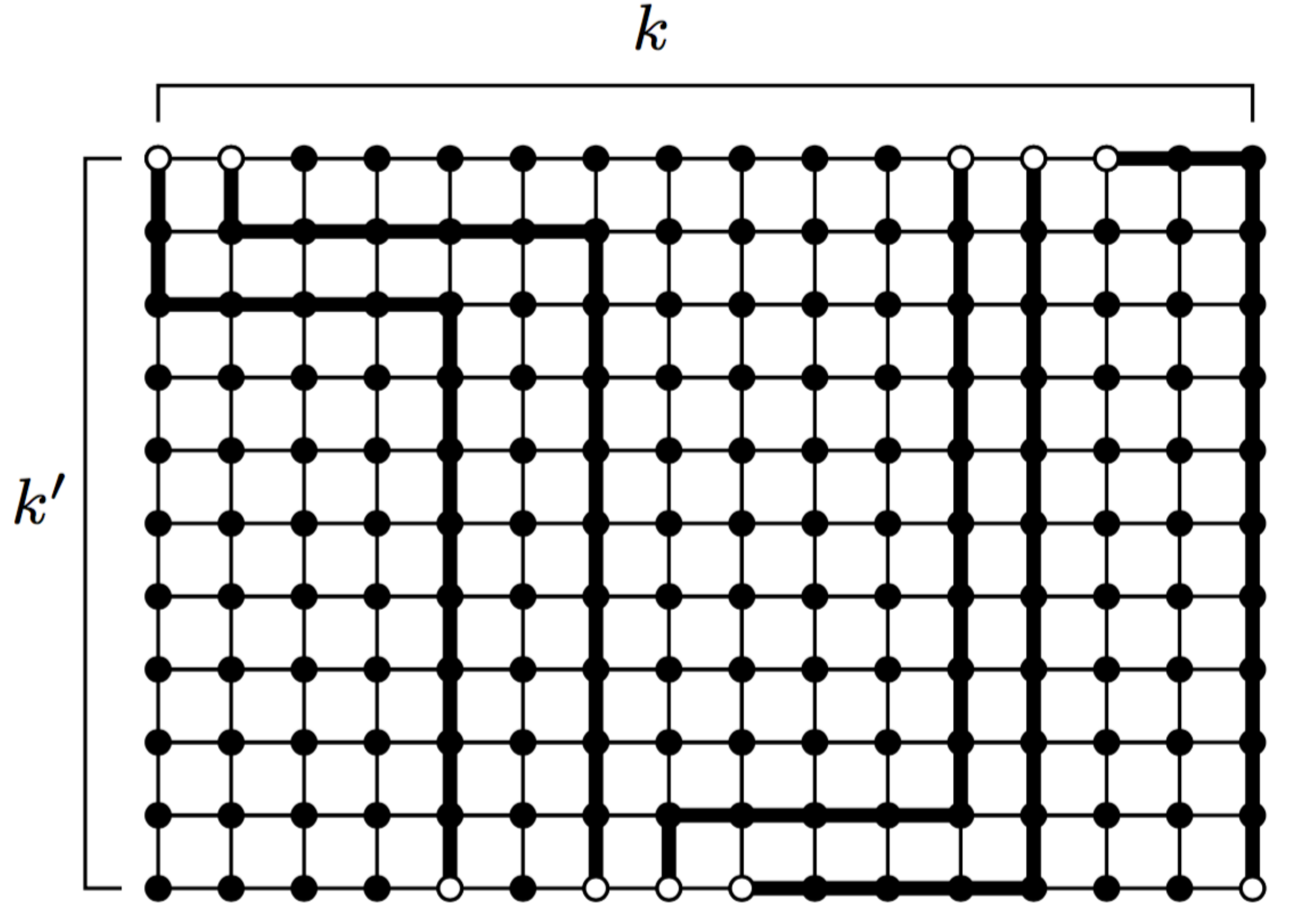}
\end{center}
\caption{An example of the proof of Lemma~\ref{upside}, where $k=16$, $ k'=11$, and $\rho=5$. The white vertices of the higher (resp. lower) horizontal line are the vertices in  $\{p_{1}^{\rm up},\ldots,p_{5}^{\rm up}\}$
(resp. $\{p_{1}^{\rm down},\ldots,p_{5}^{\rm down}\})$.}
\label{fig:h6jkdsld4r}
\end{figure}

\begin{lemma}
\label{upside}
 Let $k,k',\rho$ be integers such that  $0\leq \rho\leq k'\leq k$.
Let $\Gamma$ be a $(k\times k')$-grid
and let $\{p_{1}^{\rm up},\ldots,p_{\rho}^{\rm up}\}$
(resp. $\{p_{1}^{\rm down},\ldots,p_{\rho}^{\rm down}\}$)  be vertices of the higher  (resp. lower) horizontal line arranged as they appear in it from left to right.   
Then the grid $\Gamma$ contains $\rho$ pairwise disjoint paths $P_{1},\ldots,P_{\rho}$
such that, for every $h\in[\rho]$, the endpoints of $P_{h}$ are $p_{h}^{\rm up}$ 
and $p_{h}^{\rm down}$.
  \end{lemma}

\begin{proof}
We use induction on $\rho$. Clearly the lemma is obvious when $\rho=0$.
Let $(i,j)\in[k]^{2}$  such that $p_{\rho}^{\rm up}$  (resp. $p_{\rho}^{\rm down})$
is the $i$-th  (resp. $j$-th) vertex of the higher (\rm lower) horizontal line counting from left to right.
We examine first the case where 
$i\geq j$. Let $P_{\rho}$ be the path
created by starting from $p_{\rho}^{\rm up},$ 
moving $k'-1$ edges down, and then $i-j$
edges to the left.
For $h\in[\rho-1]$ let $P^{(\rm down)\prime}_{h}$
be the path created by starting from $p^{\rm down}_{h}$ and moving one edge up (clearly, $P^{\rm (down)\prime}_{h}$ consists of a single edge). We also denote by $p^{\rm (down)\prime}_{i}$ the other endpoint of $P^{\rm (down)\prime}_{i}$.
We now define $\Gamma'$ as the subgrid 
of $\Gamma$ that  occurs from $\Gamma$ after removing its lower horizontal line and,  for every $h\in[i,k]$, its $h$-th vertical line. By construction, none of the edges or vertices of $P_{\rho}$ belongs in $\Gamma'$.
Notice also that the higher (resp. lower) horizontal 
line of $\Gamma'$ contains 
all vertices in $\{p_{1}^{\rm up},\ldots,p_{\rho-1}^{\rm up}\}$
(resp.  $\{p_{1}^{\rm (down)\prime},\ldots,p_{\rho-1}^{\rm (down)\prime}\}$).
From the induction hypothesis,  $\Gamma'$ contains $\rho-1$ pairwise disjoint paths $P_{1}',\ldots,P_{\rho-1}'$
such that for every $h\in[\rho-1]$, the endpoints of $P_{h}$ are $p_{h}^{\rm up}$ 
and $p_{h}^{\rm (down)\prime}$.
It is now easy to verify that 
$P_{1}'\cup P_{1}^{\rm (down)\prime},\ldots,P_{\rho-1}'\cup P_{\rho-1}^{\rm (down)\prime}, P_{\rho}$ is the required collection of pairwise disjoint paths. 
For the case where $i<j$, just reverse the same grid upside down and the proof is identical (see Figure~\ref{fig:h6jkdsld4r}).
\end{proof}

\begin{lemma}
\label{h6jkld4r}
Let $\Gamma$ be a $(k\times k)$-grid embedded in the plane
and assume  
that the vertices of its outer cycle,  arranged in clockwise order, are:
$$\{v_1^{\rm up},\ldots,v_k^{\rm up},v_{2}^{\rm right},
\ldots,v_{k-1}^{\rm right},v_{k}^{\rm down},\ldots,v_{1}^{\rm down},
v_{k-1}^{\rm left},\ldots,v_{2}^{\rm left},v_{1}^{\rm up}\}.$$
Let also $H$ be a graph 
whose vertices have degree $0$ or $1$ and they 
can be cyclically arranged in clockwise order as $$\{x_1^{\rm up},\ldots,x_k^{\rm up},
x_k^{\rm down},\ldots,x_{1}^{\rm down},x_{1}^{\rm up}\}$$ such that if we add to $H$ the edges 
formed by pairs of consecutive vertices in this cyclic ordering, the resulting graph $H^{+}$ is outerplanar. Let $V^1$ be the vertices of $H$ that have degree 1 and let 
 $H^1=H[V^{1}]$. Then $H^1$ is a topological minor of $\Gamma$ via some pair
$(\phi_{0},\phi_{1}),$ satisfying the following properties:
\begin{enumerate}
\item $\phi_0(x_{i}^{\rm up})=v_{i}^{\rm up}$, $i\in\{1,\ldots,k\}\cap V^{1}$
\item $\phi_0(x_{i}^{\rm down})=v_{i}^{\rm down}$, $i\in\{1,\ldots,k\}\cap V^{1}$.
\end{enumerate}
\end{lemma}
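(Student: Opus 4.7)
The hypothesis that $H^+$ is outerplanar, together with the fact that $H^+$ contains a Hamilton cycle on its $2k$ vertices in the prescribed cyclic order, is equivalent to the statement that $H$, viewed as a matching of the $2k$ cyclically ordered vertices, is a \emph{non-crossing perfect matching}. The plan is to prove the lemma by induction on $k$. The base case $k=1$ is immediate, since the only admissible $H$ is the single edge $\{x_1^{\mathrm{up}},x_1^{\mathrm{down}}\}$, realized by the zero-length path at the unique vertex of the $(1\times 1)$-grid.

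For the inductive step, I invoke the standard \emph{ear property} of non-crossing matchings: any non-crossing perfect matching on $2k\ge 4$ cyclically ordered vertices has a \emph{short chord}, that is, an edge between two cyclically consecutive vertices. Depending on the type of short chord there are three (symmetric) cases. \emph{Corner type:} the short chord is $\{x_1^{\mathrm{up}},x_1^{\mathrm{down}}\}$ (or symmetrically $\{x_k^{\mathrm{up}},x_k^{\mathrm{down}}\}$); I route it along the leftmost (resp.\ rightmost) column of $G$, and then observe that the subgraph $G'$ obtained by deleting this column is a $(k\times(k-1))$-grid whose top and bottom boundaries are $v_2^{\mathrm{up}},\ldots,v_k^{\mathrm{up}}$ and $v_2^{\mathrm{down}},\ldots,v_k^{\mathrm{down}}$, and which trivially contains a subdivision of the $((k-1)\times(k-1))$-grid with the same top/bottom boundary (keep $k-1$ of the $k$ rows as branch rows and use the extra row to subdivide the columns). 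The residual matching $H\setminus\{x_1^{\mathrm{up}},x_1^{\mathrm{down}}\}$ is non-crossing on the residual $2(k-1)$ cyclically ordered vertices, so induction applies.

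\emph{Top type:} the short chord is $\{x_i^{\mathrm{up}},x_{i+1}^{\mathrm{up}}\}$ for some $i\in\{1,\ldots,k-1\}$ (bottom type is analogous). I route it as the one-edge path $v_i^{\mathrm{up}}v_{i+1}^{\mathrm{up}}$ and then exhibit a $((k-1)\times(k-1))$-grid subdivision inside $G\setminus\{v_i^{\mathrm{up}},v_{i+1}^{\mathrm{up}}\}$ whose top-boundary branch vertices, read in cyclic order, are
\[
v_1^{\mathrm{up}},\ldots,v_{i-1}^{\mathrm{up}},v_{i+2}^{\mathrm{up}},\ldots,v_k^{\mathrm{up}},v_k^{\mathrm{down}},
\]
and whose bottom-boundary branch vertices are $v_{k-1}^{\mathrm{down}},\ldots,v_1^{\mathrm{down}}$. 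The top-boundary path is realized in $G$ by traversing the original top row from $v_1^{\mathrm{up}}$ to $v_{i-1}^{\mathrm{up}}$, performing a row-$2$ detour through $v_{i-1,2},v_{i,2},v_{i+1,2},v_{i+2,2}$ to bridge the gap, continuing along the original top row to $v_k^{\mathrm{up}}$, and finally descending along the rightmost column to $v_k^{\mathrm{down}}$; the remaining horizontal and vertical branch paths of the subdivision are filled in by inspection. Relabelling the residual cyclically ordered vertices to fit this new top/bottom boundary (so that in the subgrid's coordinates $x_k^{\mathrm{down}}$ now plays the role of the last ``top'' vertex), the residual matching $H\setminus\{x_i^{\mathrm{up}},x_{i+1}^{\mathrm{up}}\}$ remains a non-crossing perfect matching, so induction completes the case.

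\emph{Main obstacle.} The delicate step is the Top/Bottom case: after removing the short chord, the residual matching has $k-2$ top and $k$ bottom endpoints, breaking the symmetric $(k-1)\times(k-1)$ template. The resolution described above is to absorb the asymmetry inside the subdivision by using $v_k^{\mathrm{down}}$ as an additional branch vertex of the subgrid's top row, relying on the fact that deleting an ear from a non-crossing matching on a cyclic sequence preserves non-crossingness. Verifying that the prescribed subdivision actually fits inside $G\setminus\{v_i^{\mathrm{up}},v_{i+1}^{\mathrm{up}}\}$ is a routine but careful planar check, which is where essentially all the work of the proof is concentrated.
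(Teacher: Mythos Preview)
Your inductive approach is natural but the ``Top type'' reduction has a genuine gap, not merely an unwritten verification. The $((k-1)\times(k-1))$-grid subdivision you describe inside $G\setminus\{v_i^{\mathrm{up}},v_{i+1}^{\mathrm{up}}\}$ with the prescribed boundary branch vertices does \emph{not} always exist. Take $k=3$ and $H=\{\{x_1^{\mathrm{up}},x_2^{\mathrm{up}}\},\{x_3^{\mathrm{up}},x_1^{\mathrm{down}}\},\{x_2^{\mathrm{down}},x_3^{\mathrm{down}}\}\}$; this $H$ is non-crossing and its only short chords are $\{x_1^{\mathrm{up}},x_2^{\mathrm{up}}\}$ (Top type, $i=1$) and $\{x_2^{\mathrm{down}},x_3^{\mathrm{down}}\}$ (Bottom type) --- there is no corner-type chord to fall back on. For the Top chord your recipe asks for a subdivided $4$-cycle through $v_3^{\mathrm{up}},v_3^{\mathrm{down}},v_2^{\mathrm{down}},v_1^{\mathrm{down}}$ in the $3\times3$ grid with $v_1^{\mathrm{up}},v_2^{\mathrm{up}}$ deleted; but in that graph $v_3^{\mathrm{up}}$ has degree~$1$ and hence lies on no cycle at all. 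The mirrored Bottom reduction fails the same way ($v_1^{\mathrm{down}}$ becomes a degree-$1$ vertex). Even away from $i=1$, your top-row path is ill-defined whenever $i+2=k$: the row-$2$ detour ends by climbing through $v_{k,2}$ to reach $v_k^{\mathrm{up}}$, and the subsequent descent down the rightmost column must reuse $v_{k,2}$. So the step you call ``routine'' is precisely where the argument breaks.

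The paper sidesteps all of this by giving a direct, non-inductive routing. It partitions $E(H)$ into upper edges $E_U$, down edges $E_D$, and crossing edges $E_C$, notes that $|E_U|=|E_D|=:q$, and proves the key combinatorial bound that every edge in $E_U$ (resp.\ $E_D$) has ``length'' at most $q$. This lets all upper edges be routed as nested $\sqcap$-shapes confined to the top $q$ rows, and symmetrically for the bottom $q$ rows; the untouched middle $k-2q=|E_C|$ rows then form an $|E_C|\times k$ subgrid through which the crossing edges are routed by an explicit rank/stretch scheme. The crucial point is that the depth consumed by one nested family is bounded by that family's own size, not by $k$ --- exactly the global information that peeling off a single ear cannot exploit.
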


\begin{proof} Let $U=\{x_1^{\rm up},\ldots,x_k^{\rm up}\}\cap V^1$
and $D=\{x_1^{\rm down},\ldots,x_{k}^{\rm down}\}\cup V^1$.
We define $\phi_{0}$ as in the statement of the lemma.
In the rest of the proof we provide the definition of  $\phi_{1}$.
We partition the  edges of $H^1$ into three sets:
the {\em upper edges} $E_{U}$ that connect vertices in 
$U$, the {\em down edges} $E_{L}$
that connect vertices in  $D$,
and the {\em crosssing edges} $E_{C}$ that have one endpoint in $U$ and one 
in $D$. As $|V(H^1)|\leq 2k$  we obtain that $|E(H^1)|\leq k$ and therefore  $|E_U|+|E_D|+|E_{C}|=|E(H^1)|\leq k$.
We set  $\rho=|E_{C}|$. 

We recursively define the {\em depth} of an  
edge $\{x_{i}^{\rm up},x_{j}^{\rm up}\}$ in $E_{U}$  as follows: it is  $0$ if there is no edge of $E_{U}$ with an endpoint 
in $\{x_{i+1}^{\rm up},\ldots,x_{j-1}^{\rm up}\}$
and is $i>0$ if the maximum depth of an edge with an endpoint in $\{x_{i+1}^{\rm up},\ldots,x_{j-1}^{\rm up}\}$ is $i-1$. The {\em depth} of an edge 
$\{x_{i}^{\rm down},x_{j}^{\rm down}\}$
is defined analogously. It directly follows, by the definition  of depth that:
%
%
%
\begin{eqnarray}
q^{\rm up}=\max\{\mbox{\em depth}(e)\mid e\in E_{U}\}+1 & \leq & |E_{U}|\label{up5rt}\\
q^{\rm down}=\max\{\mbox{\em depth}(e)\mid e\in E_{D}\}+1 & \leq & |E_{D}|\label{ddf5rt}
\end{eqnarray}
\begin{figure}[ht]
\begin{center}
\includegraphics[width=15cm]{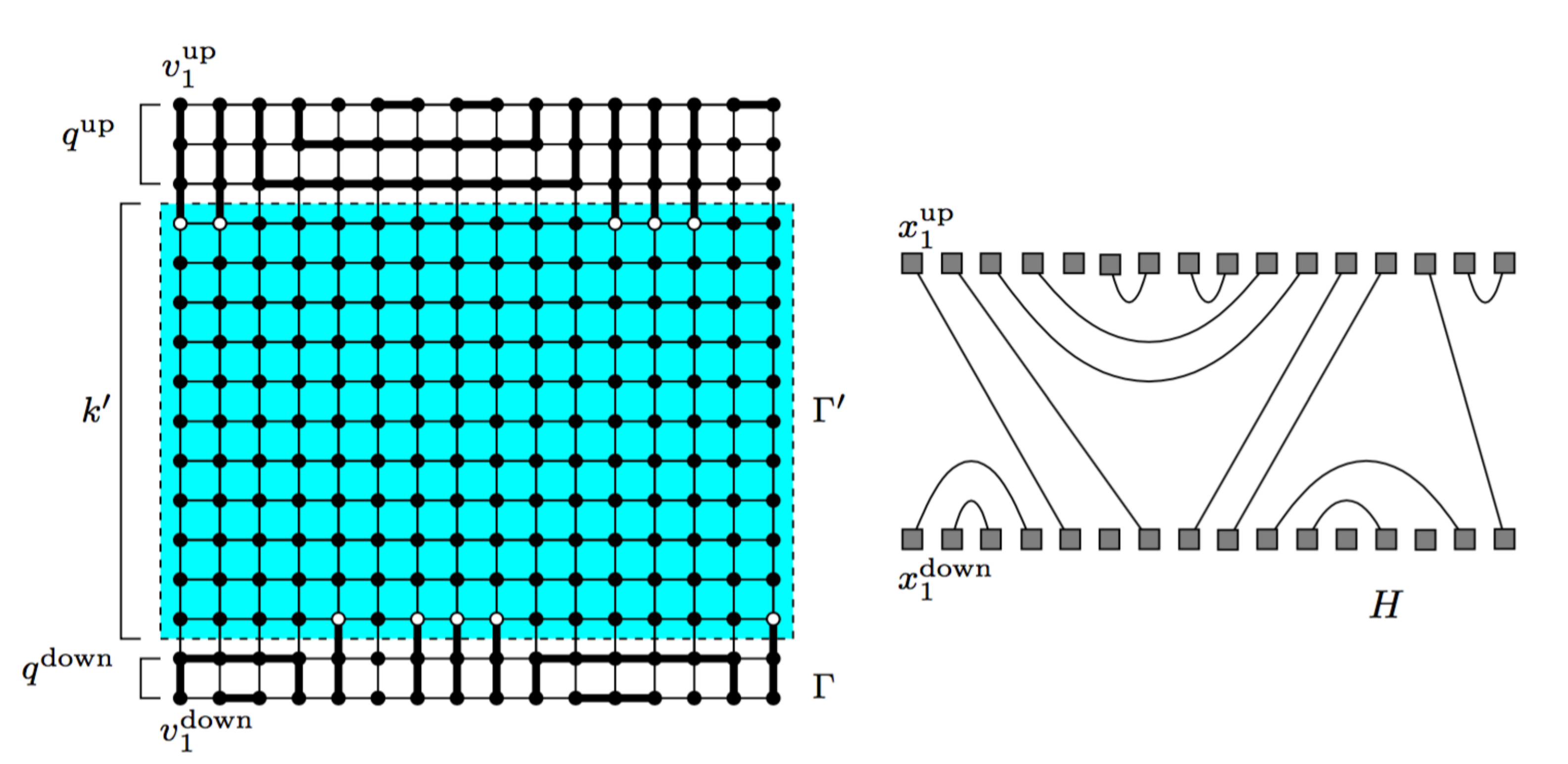}
\end{center}
\caption{An example of the proof of Lemma~\ref{h6jkld4r}.
On the left, the $(16\times 16)$-grid $G$ is depicted along with the way the graph $H$
(depicted on the right) is (partially) routed in it.
In the figure $q^{\rm up}=3$, $q^{\rm down}=2$,  $k'=11$, and $\rho=5$.}
\label{fig:h6jkld4r}
\end{figure}
We now continue with the definition of 
$\phi_{1}$ as follows:
\begin{itemize}
\item for every edge  $e=\{x_{i}^{\rm up},x_{j}^{\rm up}\}$ in $E_{U}$,
of depth $l$ and such that $i<j$, let $\phi_{1}(e)$ be the path defined if we start in the grid $G$ 
from $v_{i}^{\rm up},$ move $l$ steps down, then $j-i$ steps to the right, and finally move $l$ steps up to the vertex $v_{j}^{\rm up}$
(by ``number of steps'' we mean number of edges traversed).
\item for every edge  $e=\{x_{i}^{\rm down},x_{j}^{\rm down}\}$ in $E_{D}$,
of depth $l$ and such that $i<j$, let $\phi_{1}(e)$ be the path defined if we start in the grid $G$ 
from $v_{i}^{\rm down},$ move $l$ steps up, then $j-i$ steps to the right, and finally move $l$ steps down to the vertex $v_{j}^{\rm down}$.
\end{itemize}
Notice that the above two steps define the values of $\phi_{1}$ 
for all the upper and down edges. 
The construction guarantees that all paths in $\phi_1(E_{U}\cup E_{D})$ are mutually 
non-crossing.
Also, the distance between $\phi_0(U)$ and some  horizontal line 
of $\Gamma$ that contains edges of the images of the upper edges
is $\max\{\mbox{\em depth}(e)\mid e\in E_{U}\}$ that, from~\eqref{up5rt}, is equal  to $q^{\rm up}-1$.
Symmetrically, using~\eqref{ddf5rt} instead of~\eqref{up5rt}, the distance between $\phi_{0}(D)$ and the  horizontal lines 
of $\Gamma$ that contain edges of the images of the down edges
is equal to   $q^{\rm down}-1$. 
As a consequence,  the graph $$\Gamma'=\Gamma\setminus \{x\in V(\Gamma)\mid {\bf dist}_{\Gamma}(x,\phi_0(U))
<  q^{\rm up}\ \vee\  {\bf dist}_{\Gamma}(x,\phi_0(D))< q^{\rm down}\}$$
is a $(k\times k')$-grid $\Gamma'$, where $k'=k-(q^{\rm up}+q^{\rm down})$, whose 
vertices do not appear in any of the paths in  $\phi_{1}(E_{U}\cup E_{D})$. 
Given a crossing edge $e=\{x^{\rm up}_{i},x^{\rm down}_{j}\}\in E_{C}$, we define
the path $P_{e}^{\rm up}$ as the subpath
of $\Gamma$ created if we start from  $x^{\rm up}_{i}$ and then go $q^{\rm up}$ steps down. Similarly, we define $P_{e}^{\rm down}$ as the  subpath
of $\Gamma$ created if we start from  $x^{\rm down}_{j}$ and then go $q^{\rm down}$ steps up.  Notice that each of the paths 
 $P_{e}^{\rm up}$ (resp. $P_{e}^{\rm down}$) 
 share only one vertex, say $p_{e}^{\rm up}$ (resp. $p_{e}^{\rm down}$), with $\Gamma'$ 
that  is one of their endpoints (these endpoints are depicted as white vertices in the example of Figure~\ref{fig:h6jkld4r}). 
We use the notation $\{p_{1}^{\rm up},\ldots,p_{\rho}^{\rm up}\}$ (resp.  $\{p_{1}^{\rm down},\ldots,p_{\rho}^{\rm down}\}$)  for the vertices of the set $\{p_{e}^{\rm up}\mid e\in E_{C}\}$ (resp.  $\{p_{e}^{\rm down}\mid e\in E_{C}\}$) such that, 
for every $h\in[\rho]$, there exists 
an $e\in E_{C}$ such that 
$p^{\rm up}_{h}$ is an endpoint of $P_{e}^{\rm up}$ and $p^{\rm down}_{h}$ is an endpoint of $P_{e}^{\rm down}$.
We also agree that the vertices in $\{p_{1}^{\rm up},\ldots,p_{\rho}^{\rm up}\}$ (resp.  $\{p_{1}^{\rm down},\ldots,p_{\rho}^{\rm down}\}$) are ordered as they appear from left to right in the upper (lower) horizontal line of $\Gamma'$ (this is possible because of the outeplanarity of $H^{+}$).

Notice that $\rho=|E(H^{1})|-(|E_{U}|+|E_{D}|)\leq k-(|E_{U}|+|E_{D}|)$ which by~\eqref{up5rt} and~\eqref{ddf5rt} implies that  $\rho\leq k'$.

As $\rho\leq k'\leq k$, we can now apply 
Lemma~\ref{upside} on $\Gamma'$,
$\{p_{1}^{\rm up},\ldots,p_{\rho}^{\rm up}\}$ and $\{p_{1}^{\rm down},\ldots,p_{\rho}^{\rm down}\}$ and obtain a collection $\{P_{e}\mid e\in E_{C}\}$ of $\rho$ pairwise disjoint paths in $\Gamma'$
between the 
vertices of $\{p_{e}^{\rm up}\mid e\in E_{C}\}$ and the vertices of $\{p_{e}^{\rm down}\mid e\in E_{C}\}$.
It is now easy to verify that 
$\{P_{e}^{\rm up}\cup P_{e}\cup P_{e}^{\rm down}\mid e\in E_{C}\}$ is a collection of 
$\rho$ vertex disjoint paths between $U$ and $D$. We can now complete the definition of $\phi_{1}$ for the crossing edges of $H$ by setting, for each $e\in E_{C}$,
$\phi(e)=P_{e}^{\rm up}\cup P_{e}\cup P_{e}^{\rm down}$. By the above construction it is clear that $(\phi_{1},\phi_{2})$ provides the claimed topological isomorphism.
%
\end{proof}

\begin{lemma}
\label{ofiisf8}
Let $G$ be a graph with a linkage $L$ consisting of $k$ paths.
Let also ${\cal U}=({\cal X},{\cal Z})$ be an $L$-tidy tilted grid 
of $G$ with capacity $m$. 
Let also $\Delta$ be the closed-interior of  the perimeter of $G_{\cal U}$.
If $m> 2^{k}$, then
$G$ contains a  linkage $L'$ 
such that 
\begin{enumerate}
\item $L$ and $L'$ are equivalent,
\item $L'\setminus \Delta\subseteq L\setminus \Delta$, and 
\item $|E(\cupall {\cal Z}\cap L')| <  |E(\cupall {\cal Z}\cap L)|$.
\end{enumerate}
\end{lemma}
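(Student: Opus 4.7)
The plan is to apply Proposition~\ref{taken_from_IPEC_11_paper} to the closed disk $\Delta$ and then realize the resulting non-crossing curves as concrete paths of $G_{\cal U}\subseteq G$ via Lemma~\ref{h6jkld4r}. Because ${\cal U}$ is $L$-tidy, the intersection $L\cap\Delta$ equals $\cupall {\cal Z}$, a family of $m$ pairwise vertex-disjoint paths whose endpoints lie on $X_1\cup X_m$, so $L$ crosses $\Delta$ vertically with $m>2^{k}$ lines. Proposition~\ref{taken_from_IPEC_11_paper} therefore delivers a collection ${\cal N}$ of strictly fewer than $m$ mutually non-crossing lines in $\Delta$, each joining two points of $\bnd(\Delta)\cap L$, together with a sub-linkage $R\subseteq L\setminus\intr(\Delta)$, such that $R\cup\cupall {\cal N}$ is a linkage of $(G\setminus\Delta)\cup\cupall {\cal N}$ equivalent to $L$.

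Next I would realize $\cupall {\cal N}$ as actual paths inside $G_{\cal U}$. The endpoints of the lines in ${\cal N}$ sit on $X_{1}\cup X_{m}$, and, after the contractions defining the representation $G_{\cal U}^{*}$, they correspond precisely to top-and-bottom boundary vertices of an $(m\times m)$-grid, matching the setup of Lemma~\ref{h6jkld4r}. The non-crossing property of ${\cal N}$ makes the $1$-regular graph $H$ formed by its endpoint pairings (completed, if $|{\cal N}|<m$ requires it, by a non-crossing matching of any unmatched boundary vertices with their nearest unmatched neighbours along the perimeter) outerplanar with respect to the cyclic perimeter order. Lemma~\ref{h6jkld4r} then embeds $H$ as a topological minor of $G_{\cal U}^{*}$ while fixing $\phi_{0}$ on the boundary, and reinserting the contracted subpaths $I_{i,j}$ lifts this embedding to a linkage $L^{\circ}\subseteq G_{\cal U}$ realizing $\cupall {\cal N}$. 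Setting $L':=R\cup L^{\circ}$, conditions~(1) and~(2) follow immediately: equivalence with $L$ combines the Proposition with the realization, and $L'\setminus\Delta=R\setminus\Delta\subseteq L\setminus\Delta$ by the choice of $R$.

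For the strict decrease in~(3), I would compare edge counts along the horizontal cut of $G_{\cal U}^{*}$ between rows $\lceil m/2\rceil$ and $\lceil m/2\rceil+1$, which consists of exactly $m$ vertical edges, all lying in $\cupall {\cal Z}$ and all used by $L$. In the construction of Lemma~\ref{h6jkld4r}, the images of the $|E_{U}|$ upper edges only descend to row $q:=|E_{U}|$ and those of the $|E_{D}|=q$ lower edges only ascend to row $m-q+1$; since $2q+|E_{C}|\leq|{\cal N}|<m$ forces $q<m/2$, both families stay strictly on their side of the middle cut. Only the $|E_{C}|$ crossing edges therefore traverse the cut, each using a single vertical edge, so $L^{\circ}$ uses at most $|E_{C}|\leq|{\cal N}|<m$ of the $m$ middle-cut edges. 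Hence at least one edge of $\cupall {\cal Z}$ is absent from $L'$, giving $|E(\cupall {\cal Z}\cap L')|<|E(\cupall {\cal Z}\cap L)|$.

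The main obstacle I foresee is the compatibility step: Lemma~\ref{h6jkld4r} is stated for a $1$-regular graph on all $2m$ boundary vertices, whereas Proposition~\ref{taken_from_IPEC_11_paper} may leave some endpoints of $\bnd(\Delta)\cap L$ unmatched. Completing $H$ by pairing each unmatched endpoint with its closest unmatched neighbour along the cyclic perimeter order keeps $H$ outerplanar, but one must verify that these auxiliary pairings are of bounded ``length'' in the sense of the lemma's proof, so that their realizing paths remain near the boundary and do not contribute to the middle cut. Provided this routing guarantee is maintained, the count $|E_{C}|<m$ controls all middle-cut crossings and the strict inequality stands.
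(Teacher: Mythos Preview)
Your approach is exactly the paper's: apply Proposition~\ref{taken_from_IPEC_11_paper} to the disk $\Delta$, then realize the resulting non-crossing system ${\cal N}$ inside the grid via Lemma~\ref{h6jkld4r}, and set $L'=R\cup(\text{realization of }{\cal N})$. The paper carries this out after first passing to the contracted pair $(G^{*},{\cal U}^{*})$ so that the grid is a genuine $(m\times m)$-grid; you contract only for the routing step, which is equally fine. Your identification of the compatibility gap is on target: Lemma~\ref{h6jkld4r} is stated for a \emph{perfect} matching on the $2m$ boundary vertices, while ${\cal N}$ has $<m$ edges. The paper glosses over this (it simply says ``apply Lemma~\ref{h6jkld4r} to $H=L_{0}\cap\Delta^{*}$''), so your explicit completion of $H$ is the right repair.

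Two remarks on your execution. First, the worry in your final paragraph dissolves once you notice that $L^{\circ}$ keeps only the images of the ${\cal N}$-edges and \emph{discards} the auxiliary matching edges; hence whether the auxiliary pairings are upper, lower, or crossing is irrelevant to the edge count in~(3). Second, the inequality ``$2q+|E_{C}|\le|{\cal N}|<m$'' conflates two objects: with $H$ the completed matching one has $2q+|E_{C}^{H}|=m$, not $\le|{\cal N}|$. The clean version of your middle-cut argument is: for the completed $H$ the routing of Lemma~\ref{h6jkld4r} keeps upper edges in rows $1,\dots,q$ and lower edges in rows $m-q+1,\dots,m$ with $q=|E_{U}^{H}|\le m/2$, so neither family touches the cut between rows $\lceil m/2\rceil$ and $\lceil m/2\rceil+1$; among the paths actually kept in $L^{\circ}$, only the crossing edges of ${\cal N}$ traverse that cut, and there are at most $|{\cal N}|<m$ of them. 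This is in fact more careful than the paper's one-line justification of condition~(3), whose literal claim that $\phi_{1}(E(H))\subsetneq E(\cupall{\cal Z}^{*})$ cannot hold as written since the routing uses horizontal edges.
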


\begin{proof}
We use the notation ${\cal X}=\{X_{1},\ldots,X_{m}\}$ and ${\cal Z}=\{Z_{1},\ldots,Z_{m}\}$.
Let $G_{\cal U}$ be the realization of ${\cal U}$ in $G$ and let $G^*$ (resp. $L^*$)
be the graph (resp. linkage) obtained from $G$  (resp. $L$) if we contract all edges 
in the paths of  $\bigcup_{(i,j)\in\{1,\ldots,r\}^{2}}I_{i,j}$, where 
$I_{i,j}=X_{i}\cap Z_{j}$, $i,j\in\{1,\ldots,m\}$. 
We also define ${\cal X}^*$ and ${\cal Z}^*$ by applying the same contractions to their paths.
Notice that  ${\cal U}^*=({\cal X}^*,{\cal Z}^*)$ is an $L^*$-tidy tilted grid 
of $G^*$ with capacity $m$ and that the lemma follows if
we find a linkage $L'^*$
such that  the above three conditions are true for $\Delta^{*},L^*,L'^*,$ and ${\cal Z}^{*}$, where $\Delta^{*}$ is the closed-interior of the perimeter of $G_{\cal U}^{*}$ (recall that $G_{\cal U}^{*}$ is the representation of ${\cal U}$ that is isomorphic to $G_{{\cal U}^{*}}$).

Let $G^{*-}=(G^{*}\setminus \Delta^*)\cup \cupall {\cal Z}$
and apply Proposition~\ref{taken_from_IPEC_11_paper}
on $G^{*-}$, $\Delta^{*}$, and $L^{*}$. Let  ${\cal N}$ 
be a collection of strictly less than $m$ mutually non-crossing lines in $D$
each connecting two points of $\bnd(\Delta^*)\cap L^*$ and a linkage 
$R\subseteq L^*\setminus \int(\Delta^*)$ 
such that $L_0=R\cup \cupall {\cal N}$ is a  linkage 
of the graph $(G^*\setminus \Delta^*)\cup\cupall {\cal N}$ that is equivalent to $L^*$. Let $H=(L_{0}\cap \Delta^{*})\cup (L^{*}\cap \bnd(\Delta^{*}))$. 
Notice that in $H$, the set 
$V(L_{0}\cap \Delta^{*})$ contains the vertices 
of $H$ of degree 1 while  the rest of the vertices of $H$ have  degree 0 and all edges of $H$ have their endpoints in $V(L_{0}\cap \Delta^{*})$.
Recall that  the $(m\times m)$-grid $\Gamma$ is a topological minor of $G_{\cal U}^*$ 
via some pair $(\chi_{0},\chi_{1})$ satisfying the conditions A and B in the definition of 
tilted grid.

We are now in position to apply Lemma~\ref{h6jkld4r} for the $(m\times m)$-grid $\Gamma$ and $H$.
We obtain  that $H^{1}=L_{0}\cap \Delta^{*}$ is a topological minor of $\Gamma$ via some pair
$(\phi_{0},\phi_{1})$. We now define the graph 
$$L=\bigcup_{e\in E(H^1)}E(\phi_{1}(e)).$$ Notice that $L$ is a subgraph of $\Gamma$. We also define  
the graph 
$$Q=\bigcup_{e\in E(L)}\chi_{1}(e)$$ which, in turn,  is a subgraph of $G_{\cal U}^*$.
Observe that 
 $L'^*=R\cup Q$
is a linkage of $G^*$ that is equivalent to $L^{*}$. This proves Condition~1.
Condition 2 follows from the fact that  $R\subseteq L^*\setminus \int(\Delta^*)$.
Notice now that, as $|{\cal N}|<m$, $E(\cupall{\cal Z}^*\cap Q)$
is a proper subset of $E(\cupall {\cal Z}^*)$.
By construction of $L'^*$, it holds that  $E(\cupall {\cal Z}\cap L'^*) =E(\cupall{\cal Z}\cap Q)$.
Moreover, as  ${\cal U}^*=({\cal X}^*,{\cal Z}^*)$ is an $L^*$-tidy tilted grid 
of $G^*$, it follows that $E(\cupall {\cal Z}^*)=E(\cupall {\cal Z}^*\cap L^*)$. Therefore, Condition~3 follows.
\end{proof}

%

\subsection{Existence of an irrelevant vertex}
\label{subsec:irre}
%
%

We now bring together all results from 
the previous subsections in order to prove Theorem~\ref{main}.

\begin{lemma} 
\label{gthwdsgosd}
There exists an algorithm that, given an instance 
$(G,{\cal P}=\{(s_{i},t_{i})\in V(G)^{2}, i\in\{1,\ldots,k\}\})$ of   \PDP,  
either outputs a tree-decomposition of $G$ of width at most $9\cdot (k\cdot 2^{k+2}+1)\cdot \lceil \sqrt{2k+1}\, \rceil$ or outputs 
an irrelevant  vertex $x\in V(G)$ for $(G,{\cal P})$. This algorithm runs in $2^{2^{O(k)}}\cdot n$ steps.
\end{lemma}

\begin{proof}
Let $T=\{s_{1},\ldots,s_{k},t_{1},\ldots,t_{k}\}$. 
By applying the  algorithm of Lemma~\ref{u82mnb1o}, for $r=k\cdot 2^{k+2}$ 
either 
we output a tree-decomposition of $G$ of width at most 
$9(r+1)\cdot \lceil \sqrt{2k+1}\, \rceil )$ or we 
find an internally chordless  cycle $C$ 
of $G$ such that  $G$ contains a tight sequence of 
cycles  ${\cal C}=\{C_{0},\ldots, C_{r}\}$ in $G$ where 
$C_{0}=C$ and 
all vertices of $T$
are in the open exterior of $C_{r}$. From  Lemma~\ref{u82mnb1o}, this can be done 
in $2^{(r\cdot\sqrt{|T|})^{O(1)}} \cdot n =  2^{2^{O(k)}}\cdot n$  steps.

Assume that $G$ has a linkage whose pattern is 
${\cal P}$ and, among all such linkages, let $L$ be a ${\cal C}$-cheap one.
Our aim is to prove that $V(L\cap C_{0})=\varnothing,$
i.e., we may pick $x$ to be any of the vertices in $D_{0}$. 

First, we can assume that $k\geq 2$. Otherwise,  if $k=1$,
the fact that $L$ is ${\cal C}$-cheap, implies  that
  $L\cap D_{r-1}=\varnothing\Rightarrow L\cap D_{0}=\varnothing$ and we are done.

For every $i\in\{0,\ldots,r\}$, we 
define ${\cal Q}^{(i)}=({\cal C}^{(i)},L^{(i)})$
where ${\cal C}^{(i)}=\{C_{0},\ldots,C_{i}\}$
and $L^{(i)}$ is the subgraph of $L$ consisting of the union of 
the connected components 
of $L$ that  have common points with $D_{i}$.
As $r+1>k$, at least one of ${\cal Q}^{(i)}, i\in\{0,\ldots,r\}$
is touch-free. Let ${\cal Q}'=({\cal C}',L')$ be the 
touch-free CL-configuration in $\{{\cal Q}^{(1)},\ldots,{\cal Q}^{(r)}\}$ of the 
highest index, say  $h$. In other words, ${\cal C}'={\cal C}^{(h)}$ and $L'=L^{(h)}$.
Moreover, ${\cal C}'$ is tight in $G$
and $L'$ is ${\cal C}'$-cheap. Let $k'$ be the number of connected components 
of $L'$. 
We set $d=r-h$ and observe that $k'\leq k-d$,  
while ${\cal C}'$ has $r'=r+1-d>0$ concentric cycles.
Again, we  assume that $k'\geq 2$ as, otherwise,  the fact that $L'$ is ${\cal C}'$-cheap
implies that
  $L'\cap D_{r'-1}=\varnothing\Rightarrow L'\cap D_{0}=\varnothing$ and we are done. Therefore $0\leq d\leq k-2$.

%
%
As  ${\cal C}'$ is tight in $G$
and $L'$ is ${\cal C}'$-cheap, by Lemma~\ref{DiscTheorem1},
${\cal Q}'$ is convex.
To prove  that $V(L\cap C_{0})=\varnothing$ it is enough to show 
that all segments of ${\cal Q}$ have positive eccentricity
and for this it is sufficient to prove  that  all segments of ${\cal Q}'$ have positive eccentricity. 
Assume to the contrary that some segment $P_0$ of ${\cal Q}'$
has eccentricity $0$. Then, from the third condition in the definition of convexity
we can derive the existence of a sequence $P_{0},\ldots,P_{r'-1}$
of segments such that for each $i\in\{0,\ldots,r'-1\}$, $P_{i+1}$ is 
inside the zone of $P_{i}$. This implies the existence in the segment tree $T({\cal Q}')$
of a path of length  $r'$ from its root to one of its leaves, therefore 
$T({\cal Q}')$ has height $r'$. By  Lemma~\ref{theo:bounding_number_of_types}, the 
real height of $T({\cal Q}')$ is at most $2k'-3$. By Observation~\ref{ogll5fk}, the dilation of $T({\cal Q}')$
is at least $\frac{r'}{2k'-3}\geq \frac{k\cdot 2^{k+2}-d}{2k-2d}> \frac{k\cdot 2^{k+2}}{2k}=2^{k+1}$. 
By Observation~\ref{ogsdll5dfk} and Lemma~\ref{theo:bounding_more},
$G$ contains an $L'$-tidy tilted grid ${\cal U}=({\cal X},{\cal Z})$ of capacity $>2^{k}$.
From~Lemma~\ref{ofiisf8}, $G$ contains another linkage $L''$ with the same pattern as $L'$
and such that $c(L'')<c(L')$, a contradiction to the fact that $L'$ is ${\cal C}'$-cheap.

Since $V(L\cap C_{0})=\varnothing,$ any vertex of $G$ in the 
closed-interior of $C_0$ is irrelevant. 
%
\end{proof}

\noindent{{\em Proof of Theorem~\ref{main}}.
The proof follows from Lemma~\ref{gthwdsgosd}, 
\href{http://files.thilikos.info/data/various/result_check.png}{taking into account} that, for every $k\geq 1$, 
$$\href{http://files.thilikos.info/data/various/check.py}{82\cdot k^{3/2}\cdot 2^{k}>9\cdot (k\cdot 2^{k+2}+1)\cdot \lceil \sqrt{2k+1}\, \rceil )}$$

\vspace{-3mm}
\qed

%
%
%
%

\section{An algorithm for \PDP}
\label{sec:algo}

In this section we prove Theorem~\ref{theorem:algo}. 
In particular, we briefly describe an algorithm that,
given an instance $(G,{\cal P})$ of \DP\ where $G$ is planar, provides 
a solution to \PDP, if one exists,   in $2^{2^{O(k)}}\cdot n^{O(1)}$ steps.

Our algorithm is based on the following proposition.

%
%

\begin{proposition}[\!\!\cite{Scheffler94apra}]
\label{tpoolll}
There exists an algorithm that, given an instance $(G,{\cal P})$ of \PDP\  
and a tree decomposition of $G$ of width at most $w$, 
either  reports that $(G,{\cal P})$ is a NO-instance or  outputs 
a solution of \PDP\ for $(G,{\cal P})$ in $2^{O(w\log w)}\cdot n$ steps.
\end{proposition}

\begin{proof}[Proof of Theorem~\ref{theorem:algo}]
By applying the algorithm of Lemma~\ref{gthwdsgosd},  
we either  find an irrelevant vertex $v$  for $(G,{\cal P})$ or we 
obtain a tree-decomposition of $G$ of width $2^{O(k)}$.
In the first case,  we again look for an irrelevant vertex in the equivalent 
instance   $(G,{\cal P})\leftarrow (G\setminus v,{\cal P})$.
This loop
breaks if the second case appears, namely when 
 a tree decomposition of  $G$ of 
width $2^{O(k)}$ is found.
Then we apply the algorithm of Proposition~\ref{tpoolll},
that solves the problem in $2^{2^{O(k)}}\cdot n$ steps.
As, unavoidably, the loop will break in less than $n$ steps, the claimed running time follows.
\end{proof}

\noindent
{\bf Acknowledgment.} We thank Ken-ichi Kawarabayashi and Paul Wollan for
providing details on the bounds in~\cite{KawarabayashiW2010asho}.  We are particularly thankful to the two anonymous referees for their detailed and insightful reviews
that helped us to considerably improve the paper.

%
\end{document}